\documentclass{amsart}
\usepackage{amssymb,amsmath,amsfonts,amscd,amsthm,pb-diagram}
\usepackage{amsbsy,bm}
\usepackage[usenames,dvipsnames]{color}
\usepackage[normalem]{ulem}

\newtheorem{theorem}{Theorem}[section]

\newtheorem{lemma}{Lemma}[section]

\newtheorem{corollary}{Corollary}[section]
\newtheorem{remark}{Remark}[section]

\newtheorem*{theorem A}{Theorem A}
\newtheorem*{main theorem}{Main Theorem}

\begin{document}
\title[Characterizations of the Whitney and contact Whitney spheres]
{New characterizations of the Whitney spheres and the contact Whitney spheres}

\author{Zejun Hu and Cheng Xing}
\address{%
School of Mathematics and Statistics, Zhengzhou University,
Zhengzhou 450001, People's Republic of China.}
\email{huzj@zzu.edu.cn; xingchengchn@yeah.net}

\thanks{2020 {\it Mathematics Subject Classification.}
Primary 53C24; Secondary 53C25, 53D12}

\thanks{This project was supported by NSF of China, Grant Number 11771404.}

\date{}

\keywords{Complex space form, Sasakian space form, Whitney sphere,
contact Whitney sphere, integral inequality.}

\begin{abstract}
In this paper, based on the classical K. Yano's formula, we first establish an optimal
integral inequality for compact Lagrangian submanifolds in the complex space forms,
which involves the Ricci curvature in the direction $J\vec{H}$ and the norm of the
covariant differentiation of the second fundamental form $h$, where $J$ is the almost
complex structure and $\vec{H}$ is the mean curvature vector field. Second and analogously,
for compact Legendrian submanifolds in the Sasakian space forms with Sasakian structure
$(\varphi,\xi,\eta,g)$, we also establish an optimal integral inequality involving
the Ricci curvature in the direction $\varphi\vec{H}$ and the norm of the modified 
covariant differentiation of the second fundamental form. The integral inequality is 
optimal in the sense that all submanifolds attaining the equality are completely classified.
As direct consequences, we obtain new and global characterizations for the Whitney
spheres in complex space forms as well as the contact Whitney spheres in Sasakian
space forms. Finally, we show that, just as the Whitney spheres in complex space forms, 
the contact Whitney spheres in Sasakian space forms are locally conformally flat manifolds 
with sectional curvatures non-constant.
\end{abstract}

\maketitle
\numberwithin{equation}{section}

\section{Introduction}\label{sect:1}

In this paper, we consider compact Lagrangian submanifolds in the $n$-dimensional
complex space form $N^n(4c)$ of constant holomorphic sectional curvature $4c$,
$c\in\{0,1,-1\}$; and analogously, we also consider compact Legendrian submanifolds
in the $(2n+1)$-dimensional Sasakian space form $\tilde{N}^{2n+1}(\tilde{c})$ with
constant $\varphi$-sectional curvature $\tilde{c}$. As our main achievements, we
shall establish an integral inequality for either class of such submanifolds. Then,
as direct consequences, we can get new and integral characterizations for the
Whitney spheres in the complex space forms and also the contact Whitney spheres in
the Sasakian space forms.

Recall that the complex space form $N^n(4c)$ with almost complex structure $J$
and Riemannian metric $g$ is the complex Euclidean space $\mathbb{C}^n$ for $c=0$,
the complex projective space $\mathbb{C}P^n(4)$ for $c=1$, and the complex hyperbolic
space $\mathbb{C}H^n(-4)$ for $c=-1$. Let $M^n\hookrightarrow N^n(4c)$ be a {\it Lagrangian}
immersion of an $n$-dimensional differentiable manifold $M^n$ ($n\ge2$), i.e.,
$J$ carries each tangent space of $M^n$ into its corresponding normal space. In order
to state our first main result, we shall recall the notion of {\it Whitney spheres} in
each complex space form.

\vskip3mm\noindent
{\it Example 1.1}: {\bf Whitney spheres} in $\mathbb{C}^n$ (cf. \cite{BCM,C1,CU,LV,RU,SS}).

As the most classical notion of {\it Whitney spheres}, these are usually defined as a family
of Lagrangian immersions from the unit sphere $\mathbb{S}^n$, centered at the origin $O$
of $\mathbb{R}^{n+1}$, into the complex Euclidean space $\mathbb{C}^n\cong\mathbb{R}^{2n}$,
given by $\Psi_{r,B}: \mathbb{S}^n\rightarrow\mathbb{C}^n$ with
\begin{equation}\label{eqn:1.1}
\Psi_{r,B}(u_1,\ldots,u_{n+1})
=\tfrac{r}{1+u^2_{n+1}}(u_1,u_1u_{n+1},\ldots,u_n,u_nu_{n+1})+B,
\end{equation}
where $r$ is a positive number and $B$ is a vector of $\mathbb{C}^n$. The number $r$ and
the vector $B$ are called the radius and the center of the Whitney spheres, respectively.
Up to translation and scaling of $\mathbb{C}^n$, all the Whitney spheres are congruent
with the standard one corresponding to $r=1$ and $B=O$.
According to Gromov \cite{G}, the sphere cannot be embedded into $\mathbb{C}^n$ as a
Lagrangian submanifold. This fact implies that the Whitney spheres in \eqref{eqn:1.1}
have the best possible behavior, because it is embedded except at the poles of $\mathbb{S}^n$
where it has a double points. Indeed, in a certain sense, the Whitney spheres in $\mathbb{C}^n$
play the role of umbilical hypersurfaces of the Euclidean space $\mathbb{R}^{n+1}$ inside
the family of Lagrangian submanifolds and have been characterized in several ways as done
for the Euclidean spheres (cf. \cite{RU}).

\vskip3mm\noindent
{\it Example 1.2}: {\bf Whitney spheres} in $\mathbb{C}P^n(4)$ (cf. \cite{CMU,CU1,CV,LV}).

In this case, the {\it Whitney spheres} are a one-parameter family of Lagrangian sphere
immersions into $\mathbb{C}P^n(4)$, given by $\Psi_\theta:\mathbb{S}^n\rightarrow\mathbb{C}P^n(4)$
for $\theta>0$ with
\begin{equation}\label{eqn:1.2}
\Psi_\theta(u_1,\dots,u_{n+1})=\Pi\left(\tfrac{(u_1,\ldots,u_n)}{\cosh
\theta+i\sinh\theta u_{n+1}};\tfrac{\sinh\theta\cosh\theta(1+u^2_{n+1})
+iu_{n+1}}{\cosh^2\theta+\sinh^2\theta u^2_{n+1}}\right),
\end{equation}
where $\Pi:\mathbb{S}^{2n+1}\rightarrow\mathbb{C}P^n(4)$ is the Hopf projection.
We notice that $\Psi_\theta$ are embeddings except at the poles of $\mathbb{S}^n$
where it has a double points, and that $\Psi_0$ is the totally geodesic Lagrangian
immersion of $\mathbb{S}^n$ into $\mathbb{C}P^n(4)$.

\vskip3mm\noindent
{\it Example 1.3}: {\bf Whitney spheres} in $\mathbb{C}H^n(-4)$ (cf. \cite{CMU,CU1,CV,LV}).

Let $(\cdot,\cdot)$ denote the hermitian form of $\mathbb{C}^{n+1}$, i.e.,
$(z,w)=\sum\limits_{i=1}^nz_i\bar{w}_i-z_{n+1}\bar{w}_{n+1}$ for $z,w\in\mathbb{C}^{n+1}$,
and $\mathbb{H}^{2n+1}_1(-1)=\{z\in\mathbb{C}^{n+1}:\,(z,z)=-1\}$ be the Anti-de Sitter space
of constant sectional curvature $-1$.
Then, the {\it Whitney spheres} in $\mathbb{C}H^n(-4)$ are a one-parameter family of Lagrangian sphere
immersions into $\mathbb{C}H^n(-4)$, given by $\Phi_\theta:\mathbb{S}^n\rightarrow\mathbb{C}H^n(-4)$
for $\theta>0$ with
\begin{equation}\label{eqn:1.3}
\Phi_\theta(u_1,\ldots,u_{n+1})=\Pi\left(\tfrac{(u_1,\ldots,u_n)}{\sinh\theta+i\cosh\theta u_{n+1}};
\tfrac{\sinh\theta\cosh\theta(1+u^2_{n+1})-iu_{n+1}}{\sinh^2\theta+\cosh^2\theta u^2_{n+1}}\right),
\end{equation}
where $\Pi:\mathbb{H}^{2n+1}_1(-1)\rightarrow\mathbb{C}H^n(-4)$ is the Hopf projection. We also
notice that $\Phi_\theta$ are embeddings except in double points.

\vskip2mm
The remarkable properties of the Whitney spheres are summarized as follows:

\begin{theorem}[cf. \cite{BCM,C2,CMU,CU,CU1,CV,LV,RU}]\label{thm:1.1}
Let $x:M^n\to N^n(4c)$ be an $n$-dimensional compact Lagrangian submanifold that is
neither totally geodesic nor of parallel mean curvature vector field. Then,
$x(M^n)$ is the Whitney sphere in $N^n(4c)$ if and only if one of the following
pointwise relations holds:

{\rm(1)} The squared mean curvature $|\vec{H}|^2$ and the scalar curvature $R$ of $M^n$ satisfy
the relation $|\vec{H}|^2=\tfrac{n+2}{n^2(n-1)}R-\tfrac{n+2}{n}c$;

{\rm(2)} The second fundamental form $h$ and the mean curvature vector field $\vec{H}$ of $M^n$
satisfy $h(X,Y)=\tfrac{n}{n+2}\big[g(X,Y)\vec{H}+g(JX,\vec{H})JY+g(JY,\vec{H})JX\big]$ for $X,Y\in TM^n$;

{\rm(3)} The second fundamental form $h$ and the mean curvature vector field $\vec{H}$ of $M^n$
satisfy $\|\bar\nabla h\|^2=\tfrac{3n^2}{n+2}\|\nabla^\perp\vec{H}\|^2$. Here, $\bar\nabla h$ denotes
the covariant differentiation of $h$ with respect to the van der Waerden-Bortolotti connection of
$x:M^n\rightarrow N^n(4c)$.
\end{theorem}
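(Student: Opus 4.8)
The plan is to funnel all three characterizations through the totally symmetric cubic form $C(X,Y,Z):=g(h(X,Y),JZ)$ together with the single ``Whitney form'' encoded in (2), establish the algebraic equivalences (1)$\Leftrightarrow$(2)$\Leftrightarrow$(3) from the structure equations, and then classify the submanifolds obeying (2) as the explicit immersions of Examples 1.1--1.3. First I would record the facts that do the algebraic work. Since $N^n(4c)$ is Kähler and $M^n$ is Lagrangian, $C$ is totally symmetric; since a tangent triple $X,Y,Z$ makes $JX,JY,JZ$ normal, every coefficient of the normal part of the ambient curvature in Codazzi vanishes, so $(\bar\nabla_X h)(Y,Z)=(\bar\nabla_Y h)(X,Z)$, and together with $\bar\nabla J=0$ this makes the $(0,4)$-tensor $(\bar\nabla C)(X,Y,Z,W)=g((\bar\nabla_X h)(Y,Z),JW)$ \emph{totally} symmetric, with $\|\bar\nabla h\|^2=\|\bar\nabla C\|^2$ and $\|h\|^2=\|C\|^2$. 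Finally Gauss gives $R=n(n-1)c+n^2|\vec H|^2-\|h\|^2$. Writing $\eta(X)=g(JX,\vec H)$, which is dual to the tangent field $-J\vec H$ and satisfies $|\eta|^2=|\vec H|^2$, condition (2) says exactly that $C$ equals its pure-trace part $C^{W}(X,Y,Z)=\tfrac{n}{n+2}\big[g(X,Y)\eta(Z)+g(Y,Z)\eta(X)+g(Z,X)\eta(Y)\big]$, whose trace $\sum_i C^{W}(e_i,e_i,\cdot)=n\eta$ agrees with that of $C$.

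The equivalence (1)$\Leftrightarrow$(2) is then pure linear algebra plus Gauss. Decomposing $C=C^{W}+C^{0}$ into its pure-trace and totally symmetric trace-free parts, which are $g$-orthogonal, a direct count gives $\|C^{W}\|^2=\tfrac{3n^2}{n+2}|\vec H|^2$, so $\|h\|^2=\tfrac{3n^2}{n+2}|\vec H|^2+\|C^{0}\|^2\ge\tfrac{3n^2}{n+2}|\vec H|^2$ with equality iff $C^{0}=0$, i.e. iff (2) holds. Substituting the equality case into the Gauss formula rearranges exactly into $|\vec H|^2=\tfrac{n+2}{n^2(n-1)}R-\tfrac{n+2}{n}c$, which is (1); conversely (1) forces the equality, hence (2). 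For (2)$\Rightarrow$(3) I would differentiate the Whitney form: since $g$ is parallel, $\bar\nabla C^{W}$ is built from $\nabla\eta$, and using that the tangential part of $\bar\nabla(J\vec H)$ is $J\nabla^\perp\vec H$ (so $\|\nabla\eta\|^2=\|\nabla^\perp\vec H\|^2$), the contraction yields precisely $\|\bar\nabla h\|^2=\tfrac{3n^2}{n+2}\|\nabla^\perp\vec H\|^2$.

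The implication (3)$\Rightarrow$(2) is where compactness becomes essential. Applying the same split to the totally symmetric tensor $\bar\nabla C=\bar\nabla C^{W}+\bar\nabla C^{0}$, one checks that $\bar\nabla C^{0}$ is again trace-free (trace commutes with $\bar\nabla$) while $\bar\nabla C^{W}$ is a multiple of the symmetrized $g\otimes\nabla\eta$; contracting $g$ against any two indices of a trace-free tensor gives zero, so these two pieces are $g$-orthogonal pointwise, whence $\|\bar\nabla h\|^2=\tfrac{3n^2}{n+2}\|\nabla^\perp\vec H\|^2+\|\bar\nabla C^{0}\|^2$. Thus (3) is equivalent to $\bar\nabla C^{0}\equiv0$, i.e. $C^{0}$ \emph{parallel}, rather than to $C^{0}=0$ outright. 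Upgrading ``$C^{0}$ parallel'' to ``$C^{0}=0$'' is the delicate point: a nonzero parallel trace-free cubic form reduces the holonomy and (via a de Rham/parallelism argument, or equivalently via the integral K. Yano--Bochner identity developed later in the paper) is incompatible with the compact hypothesis once the two excluded classes are removed, so one is driven back to (2).

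Finally, the genuine geometric core --- and the step I expect to be the main obstacle --- is the classification ``(2) $\Rightarrow$ $x(M^n)$ is the Whitney sphere.'' With $C=C^{W}$ the shape operators are completely determined by the single tangent field $\zeta=-J\vec H$: away from the zeros of $\zeta$ the tangent bundle splits into the line field $\mathbb{R}\zeta$ and its orthogonal complement, on which $h$ acts as a multiple of the metric, so the structure equations collapse to a system of ODEs along the integral curves of $\zeta$. Integrating these ODEs, matching the totally geodesic limit, and using compactness to control the global behaviour of the integral curves and the double-point structure at the poles, pins the immersion down, up to the ambient isometries, to the explicit $\Psi_{r,B}$, $\Psi_\theta$, or $\Phi_\theta$; the excluded cases (totally geodesic and parallel $\vec H$) are exactly the degenerate solutions of this system, which is why they must be ruled out in the hypothesis. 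This integration-and-recognition step, rather than the tensorial algebra, is where the real work lies.
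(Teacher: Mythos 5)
First, a point of reference: the paper never proves Theorem 1.1 at all. It is a survey statement attributed to \cite{BCM,C2,CMU,CU,CU1,CV,LV,RU}, and the only parts of it the paper actually touches are Li--Vrancken's inequality (reproduced as Lemma 3.2) and the Main Theorem of \cite{LV}, both invoked as black boxes. Measured against those sources, the algebraic half of your proposal is correct and is essentially the standard route: the orthogonal splitting $C=C^{W}+C^{0}$, the count $\|C^{W}\|^2=\tfrac{3n^2}{n+2}|\vec H|^2$, and the Gauss equation do give (1) $\Leftrightarrow$ (2); differentiating $C^{W}$ gives (2) $\Rightarrow$ (3); and the pointwise identity $\|\bar\nabla h\|^2=\tfrac{3n^2}{n+2}\|\nabla^\perp\vec H\|^2+\|\bar\nabla C^{0}\|^2$ correctly identifies condition (3) with $\bar\nabla C^{0}\equiv0$.

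The genuine gaps are the two steps you yourself flag as delicate, and they are the entire content of the theorem. (i) Your mechanism for upgrading ``$C^{0}$ parallel'' to ``$C^{0}=0$'' --- holonomy reduction being ``incompatible with the compact hypothesis'' --- fails as stated: compact Lagrangian submanifolds with parallel, nonzero $C^{0}$ exist in abundance. Every flat torus $\mathbb{S}^1(r_1)\times\cdots\times\mathbb{S}^1(r_n)\subset\mathbb{C}^n$ has parallel second fundamental form, hence parallel $C^{0}\neq0$, and is compact; so are symmetric examples such as $SU(3)/SO(3)\subset\mathbb{C}P^5$ (cf. \cite{DLVW}). These are excluded from Theorem 1.1 only by the hypothesis that $\vec H$ is not parallel, so what you must actually prove is: $\bar\nabla C^{0}=0$ and $C^{0}\neq0$ somewhere imply $\nabla^\perp\vec H\equiv0$. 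That implication is precisely the dichotomy of the Main Theorem of \cite{LV} (equality forces either $\bar\nabla h=0$ or a Whitney sphere), and its proof is a hard pointwise analysis via the Ricci identity and the Gauss equation --- not a de Rham argument, and not Yano's formula either: tellingly, the paper's own Theorem 1.2, which does use Yano's formula, cannot eliminate the parallel branch and keeps it as one of the two alternatives in its equality case. You also never extract the fact that powers all the known proofs: since $\bar\nabla C$ is totally symmetric (Codazzi) and equals its pure-trace part, tracing the symmetry forces $\nabla\eta=\tfrac1n({\rm div}\,J\vec H)\,g$, i.e. $J\vec H$ is a \emph{closed conformal} vector field; it is this conformal Maslov form structure that \cite{LV,RU,CMU} (and Lemma 3.1 of this paper) run on. (ii) The classification ``(2) $\Rightarrow$ Whitney sphere'' is described as ``integrate the ODEs and recognize the immersion,'' but this warped-product analysis and explicit integration is the core geometric theorem of \cite{BCM,CV,RU,CMU}; deferring it means your proposal reduces Theorem 1.1 to the known literature rather than proving it.
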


Moreover, Castro-Montealegre-Urbano \cite{CMU} and Ros-Urbano \cite{RU} further proved
that the Whitney spheres in $N^n(4c)$ can be characterized by some other relations about
the global geometric and topological invariants.

\vskip2mm
As the first main result of this paper, we have obtained an optimal integral inequality
that involves the Ricci curvature ${\rm Ric}\,(J\vec{H},J\vec{H})$ in the direction $J\vec{H}$
and the norm of the covariant differentiation $\bar{\nabla} h$ of the second fundamental
form:

\begin{theorem}\label{thm:1.2}
Let $x:M^n\rightarrow N^n(4c)\ (n\ge2)$ be an $n$-dimensional compact Lagrangian submanifold.
Then, it holds that
\begin{equation}\label{eqn:1.4}
\int_{M^n}{\rm Ric}\,(J\vec{H},J\vec{H})~dV_{M^n}\leq
\tfrac{(n-1)(n+2)}{3n^2}\int_{M^n}\|\bar{\nabla} h\|^2~dV_{M^n},
\end{equation}
where $\|\cdot\|$ and $dV_{M^n}$ denote the tensorial norm and the volume element of $M^n$
with respect to the induced metric, respectively.

Moreover, the equality in \eqref{eqn:1.4} holds if and only if either $x(M^n)$
is of parallel second fundamental form, or it is one of the Whitney spheres
in $N^n(4c)$.
\end{theorem}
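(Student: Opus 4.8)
The plan is to apply the classical Yano integral formula to the tangent vector field $\xi:=J\vec{H}$ --- which is globally well defined since $J$ interchanges the tangent and normal bundles along a Lagrangian immersion --- and then to feed its output into a sharp pointwise algebraic inequality for $\bar\nabla h$. Recall Yano's formula: on a compact Riemannian manifold every vector field $\xi$ satisfies
\[
\int_{M^n}\Big[{\rm Ric}(\xi,\xi)+\tfrac12\|\mathcal{L}_\xi g\|^2-\|\nabla\xi\|^2-(\operatorname{div}\xi)^2\Big]\,dV_{M^n}=0.
\]
First I would compute $\nabla\xi$: using $\tilde\nabla J=0$ together with the Weingarten formula gives $\tilde\nabla_X(J\vec{H})=-JA_{\vec{H}}X+J\nabla^\perp_X\vec{H}$, whose tangential part yields $\nabla_X(J\vec{H})=J\nabla^\perp_X\vec{H}$; in particular $\|\nabla\xi\|^2=\|\nabla^\perp\vec{H}\|^2$ and $\operatorname{div}\xi=\operatorname{tr}\nabla\xi$. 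The crucial structural input is that, for a Lagrangian submanifold of a complex space form, the Codazzi equation makes the cubic form $g\big((\bar\nabla_X h)(Y,Z),JW\big)$ totally symmetric in $X,Y,Z,W$; tracing over $Y=Z$ shows that the $(0,2)$-tensor $B(X,Y):=g(\nabla_X\xi,Y)=-g(\nabla^\perp_X\vec{H},JY)$ is symmetric. Hence $\mathcal{L}_\xi g=2B$ and the antisymmetric term drops out, so Yano's formula collapses to the exact identity
\[
\int_{M^n}{\rm Ric}(J\vec{H},J\vec{H})\,dV_{M^n}=\int_{M^n}\big[(\operatorname{tr}B)^2-\|\nabla^\perp\vec{H}\|^2\big]\,dV_{M^n}.
\]

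Next, since $B$ is symmetric, the Cauchy--Schwarz inequality gives $(\operatorname{tr}B)^2\le n\,\|B\|^2=n\,\|\nabla^\perp\vec{H}\|^2$, so the right-hand side is at most $(n-1)\int_{M^n}\|\nabla^\perp\vec{H}\|^2\,dV_{M^n}$. I would then invoke the sharp pointwise inequality for the totally symmetric tensor $\bar\nabla h$, namely $\|\nabla^\perp\vec{H}\|^2\le\tfrac{n+2}{3n^2}\|\bar\nabla h\|^2$ (equivalently $\|\bar\nabla h\|^2\ge\tfrac{3n^2}{n+2}\|\nabla^\perp\vec{H}\|^2$), which results from splitting $\bar\nabla h$ into the trace part carried by $\nabla^\perp\vec{H}$ and a trace-free remainder. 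Chaining the two estimates yields precisely \eqref{eqn:1.4}.

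For the equality discussion, equality in \eqref{eqn:1.4} forces both intermediate estimates to be pointwise equalities: (i) the Cauchy--Schwarz step is sharp, i.e.\ $\nabla(J\vec{H})=\tfrac1n(\operatorname{div}J\vec{H})\,g$; and (ii) $\|\bar\nabla h\|^2=\tfrac{3n^2}{n+2}\|\nabla^\perp\vec{H}\|^2$ everywhere, which is exactly relation~(3) of Theorem~\ref{thm:1.1}. If $\bar\nabla h\equiv0$, then $x(M^n)$ has parallel second fundamental form and both conditions hold trivially. Otherwise $h\not\equiv0$, and (ii) combined with $\bar\nabla h\not\equiv0$ rules out both $\nabla^\perp\vec{H}\equiv0$ and total geodesy, so that $M^n$ is neither totally geodesic nor of parallel mean curvature vector field and Theorem~\ref{thm:1.1}(3) identifies $x(M^n)$ as a Whitney sphere. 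Conversely, parallel second fundamental form gives $\nabla^\perp\vec{H}=0$ and hence equality, while for the Whitney spheres condition (ii) is Theorem~\ref{thm:1.1}(3) and a direct computation from the explicit expression for $h$ in Theorem~\ref{thm:1.1}(2) verifies the Cauchy--Schwarz equality (i); thus both families realize equality.

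I expect the main obstacle to be twofold. First, establishing the symmetry of $\nabla(J\vec{H})$ cleanly from the Codazzi equation, since this is what forces the antisymmetric contribution in Yano's formula to vanish and reduces the formula to the sharp identity above. Second, pinning down the precise constant $\tfrac{n+2}{3n^2}$ and, above all, the exact equality locus of the algebraic tensor inequality for $\bar\nabla h$, because it is this step that couples the integral estimate to the Whitney geometry and makes the equality characterization possible.
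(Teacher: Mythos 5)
Your proposal is correct and follows essentially the same route as the paper's proof: Yano's formula applied to $J\vec{H}$, symmetry of $\nabla(J\vec{H})$ via the Codazzi equation (giving $\|\mathcal{L}_{J\vec{H}}g\|^2=4\|\nabla^\perp\vec{H}\|^2$), the Cauchy--Schwarz bound $({\rm div}\,J\vec{H})^2\le n\|\nabla(J\vec{H})\|^2$ (the paper's Lemma \ref{lem:3.1}), the Li--Vrancken pointwise inequality $\|\bar\nabla h\|^2\ge\tfrac{3n^2}{n+2}\|\nabla^\perp\vec{H}\|^2$ (Lemma \ref{lem:3.2}), and the Li--Vrancken classification (equivalently Theorem \ref{thm:1.1}(3)) for the equality case. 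The only cosmetic difference is that you integrate Yano's formula into an exact identity before estimating, whereas the paper estimates the divergence pointwise and then integrates; the content is identical.
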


\begin{remark}\label{rm:1.1}
The classification of Lagrangian submanifolds with parallel second fundamental form
in $N^n(4c)$ has been fulfilled for each $c$, see \cite{DLVW,HY} for details.
\end{remark}

From Theorem \ref{thm:1.2}, we get a new and global geometric characterization of the
Whitney spheres in $N^n(4c)$:

\begin{corollary}\label{cor:1.1}
Let $x:M^n\rightarrow N^n(c)\ (n\ge2)$ be an $n$-dimensional compact Lagrangian
submanifold with non-parallel mean curvature vector field. Then,
\begin{equation}\label{eqn:1.5}
\int_{M^n}{\rm Ric}\,(J\vec{H},J\vec{H})~dV_{M^n}=
\tfrac{(n-1)(n+2)}{3n^2}\int_{M^n}\|\bar{\nabla} h\|^2~dV_{M^n}
\end{equation}
holds if and only if $x(M^n)$ is a Whitney sphere in $N^n(c)$.
\end{corollary}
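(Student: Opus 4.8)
The plan is to read off Corollary~\ref{cor:1.1} directly from the equality statement of Theorem~\ref{thm:1.2}, the only real work being to see that the hypothesis of non-parallel mean curvature vector field eliminates one of the two alternatives in that characterization. By Theorem~\ref{thm:1.2} the equality \eqref{eqn:1.5} holds precisely when $x(M^n)$ either has parallel second fundamental form or is one of the Whitney spheres in $N^n(c)$; thus it suffices to show that the first possibility is incompatible with $\nabla^\perp\vec{H}\not\equiv0$.

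First I would recall that the mean curvature vector field is the metric trace of $h$, namely $\vec{H}=\tfrac1n\sum_{i}h(e_i,e_i)$ for any local orthonormal frame $\{e_i\}$ of $TM^n$. Differentiating with the van der Waerden--Bortolotti connection and using a frame that is parallel at the point in question, one gets $\nabla^\perp_X\vec{H}=\tfrac1n\sum_i(\bar\nabla_X h)(e_i,e_i)$ for every $X\in TM^n$, so that $\bar\nabla h\equiv0$ forces $\nabla^\perp\vec{H}\equiv0$. Hence a submanifold with parallel second fundamental form necessarily has parallel mean curvature vector field, and under the standing hypothesis of Corollary~\ref{cor:1.1} this first alternative cannot occur.

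Combining these two observations yields the forward implication: if $x:M^n\to N^n(c)$ has non-parallel mean curvature vector field and \eqref{eqn:1.5} holds, then by Theorem~\ref{thm:1.2} it must be a Whitney sphere. For the converse I would invoke Theorem~\ref{thm:1.2} once more, which asserts that every Whitney sphere realizes the equality in \eqref{eqn:1.4}; and to confirm that the biconditional is non-vacuous I would note, via the characterization in Theorem~\ref{thm:1.1}, that the Whitney spheres are neither totally geodesic nor of parallel mean curvature, so they genuinely satisfy the hypothesis of the corollary.

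Since both the inequality \eqref{eqn:1.4} and its full equality case are already granted by Theorem~\ref{thm:1.2}, I expect no genuine obstacle here: the entire content of the argument is the elementary trace identity $\bar\nabla h\equiv0\Rightarrow\nabla^\perp\vec{H}\equiv0$ together with the known non-parallelism of $\vec{H}$ on the Whitney spheres. The one point warranting care is verifying that this latter fact holds uniformly in all three ambient spaces $N^n(c)$, which is precisely what the cited Theorem~\ref{thm:1.1} supplies.
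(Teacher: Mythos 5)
Your proposal is correct and is exactly the argument the paper intends: Corollary~\ref{cor:1.1} is stated as an immediate consequence of Theorem~\ref{thm:1.2}, and the only detail to supply is the trace identity $\nabla^\perp_X\vec{H}=\tfrac1n\sum_i(\bar\nabla_X h)(e_i,e_i)$ showing that $\bar\nabla h\equiv0$ forces $\nabla^\perp\vec{H}\equiv0$, so the non-parallelism hypothesis rules out the first alternative in the equality case. Your additional check that the Whitney spheres themselves have non-parallel mean curvature vector field (so the converse is non-vacuous) is also consistent with how the paper frames Theorem~\ref{thm:1.1}.
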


\vskip2mm
Next, before stating our second main result, we shall first review the standard models of
the Sasakian space form $\tilde{N}^{2n+1}(\tilde{c})$ with Sasakian structure
$(\varphi,\xi,\eta,g)$ possessing constant $\varphi$-sectional curvature $\tilde{c}$,
then for each value $\tilde{c}$ we introduce the canonical {\it Legendrian} (i.e., {\it the $n$-dimensional
$C$-totally real}, or equivalenly, {\it integral}) submanifolds:
The contact Whitney spheres in $\tilde{N}^{2n+1}(\tilde{c})$.

\vskip3mm\noindent
{\it Example 1.4}: {\bf Contact Whitney spheres} in $\tilde{N}^{2n+1}(-3)=(\mathbb R^{2n+1},\varphi,\xi,\eta,g)$.

Here, for the Cartesian coordinates $(x_1,\ldots,x_n,y_1,\ldots,y_n,z)$ of $\mathbb{R}^{2n+1}$,
\begin{equation*}
\left\{
\begin{aligned}
&\xi=2\tfrac{\partial}{\partial z}, \ \ \eta=\tfrac12\Big(dz-\sum_{i=1}^ny_idx_i\Big), \ \
g=\eta\otimes\eta+\tfrac14\sum_{i=1}^n(dx_i\otimes dx_i+dy_i\otimes dy_i),\\[-1mm]
&\varphi\Big(\sum_{i=1}^n\big(X_i\tfrac{\partial}{\partial
x_i}+Y_i\tfrac{\partial}{\partial y_i}\big)+Z\tfrac{\partial}{\partial z}\Big)
=\sum_{i=1}^n\big(Y_i\tfrac{\partial}{\partial x_i}-X_i\tfrac{\partial}{\partial
y_i}\big)+\sum_{i=1}^nY_iy_i\tfrac{\partial}{\partial z},
\end{aligned}
\right.
\end{equation*}
define the standard Sasakian structure $(\varphi,\xi,\eta,g)$ on $\mathbb{R}^{2n+1}$.

As were introduced by Blair and Carriazo in \cite{BC}, the contact Whitney spheres in
$\tilde{N}^{2n+1}(-3)$ were the Legendrian imbeddings
$\tilde\Psi_{B,a,r}:\mathbb{S}^n\rightarrow\mathbb{R}^{2n+1}$ defined by
\begin{equation}\label{eqn:1.6}
\tilde\Psi_{B,a,r}(u_0,u_1,\ldots,u_n)=\tfrac{r}{1+u^2_0}\big(u_0u_1,\ldots,u_0u_n,u_1,\dots,
\tfrac{ru_0}{1+u^2_0}+a(1+u^2_0)\big)+B,
\end{equation}
where $r$ is a positive number, $a$ is a real constant and $B$ is a vector of
$\mathbb{R}^{2n+1}$.

\vskip3mm\noindent
{\it Example 1.5}: {\bf Contact Whitney spheres} in $\tilde{N}^{2n+1}(\tilde{c})
=(\mathbb S^{2n+1},\varphi,\xi,\eta,g)$ with $\tilde{c}>-3$.
Note that the unit sphere $\mathbb S^{2n+1}$, as a real hypersurface of
the complex Euclidean space $\mathbb C^{n+1}$, has a natural Sasakian
structure $(\bar\varphi,\bar\xi,\bar\eta,\bar g)$:
$\bar g$ is the induced metric;
$\bar\xi=JN$, where $J$ is the natural complex
structure of $\mathbb C^{n+1}$ and $N$ is the unit normal vector
field of the inclusion $\mathbb S^{2n+1}\hookrightarrow\mathbb
C^{n+1}$;
$\bar\eta(X)=\bar{g}(X,\bar\xi)$ and $\bar\varphi(X)=JX-\langle
JX,N\rangle N$ for any tangent vector field $X$ of $\mathbb
S^{2n+1}$, where $\langle\cdot,\cdot\rangle$ denotes the standard
Hermitian metric on $\mathbb C^{n+1}$.
Then, the standard Sasakian structure $(\varphi,\xi,\eta,g)$ on
$\mathbb S^{2n+1}$ is given by applying a $D_a$-homothetic
deformation as follows:
\begin{equation*}
\eta=a\bar\eta,\ \ \xi=\tfrac1a\bar\xi,\ \ \varphi=\bar\varphi,\ \
g=a\bar g+a(a-1)\bar\eta\otimes\bar\eta,
\end{equation*}
where $a$ is a positive real number and $\tilde{c}=\tfrac4a-3$.

Then, as were introduced in \cite{HY}, the contact Whitney spheres in $\tilde{N}^{2n+1}(\tilde{c})$ for $\tilde{c}>-3$
are a family of Legendrian immersions $\tilde\Psi_\theta:\mathbb{S}^n\rightarrow\mathbb{S}^{2n+1}$ for $\theta>0$,
that are explicitly given by
\begin{equation}\label{eqn:1.7}
\tilde\Psi_{\theta}(u_1,u_2,\ldots,u_{n+1})=\Big(\tfrac{(u_1,\ldots,u_n)}{\cosh\theta+i\sinh
\theta u_{n+1}}; \tfrac{\sinh
\theta\cosh\theta(1+u^2_{n+1})+iu_{n+1}}{\cosh^2\theta+\sinh^2\theta
u^2_{n+1}}\Big).
\end{equation}

\vskip3mm
{\it Example 1.6}: {\bf Contact Whitney spheres} in $\tilde{N}^{2n+1}(\tilde{c})
=(\mathbb{B}^n\times\mathbb{R},\varphi,\xi,\eta, g)$ with $\tilde{c}<-3$.
Here, $\mathbb{B}^n=\{(z_1, \ldots, z_n)\in \mathbb{C}^n;\ \|z\|^2=\sum\limits_{i=1}^n|z_i|^2< 1\}$
equipped with the usual complex structure and the canonical Bergman metric
$$
\tilde{g}=4\Big\{\tfrac{1}{1-\|z\|^2}\sum_{i=1}^ndz_id\bar{z}_i
+\tfrac{1}{(1-\|z\|^2)^{2}}\sum_{i,j=1}^nz_i\bar{z}_jdz_jd\bar{z}_i\Big\}
$$
is a K\"ahler manifold with constant holomorphic sectional curvature $-1$.
Let $t$ be the coordinate of $\mathbb{R}$ and
$\omega=\frac{2\sqrt{-1}}{1-\|z\|^2}\sum\limits_{j=1}^n(\bar{z}_jdz_j-z_jd\bar{z}_j)$.
Then, $\mathbb{B}^n\times\mathbb{R}$ has a Sasakian structure $\{\bar{\varphi},
\bar{\xi},\bar{\eta},\bar{g}\}$ with constant $\bar{\varphi}$-sectional curvature $-4$,
defined as follows:
\begin{equation}\nonumber
\left\{
\begin{aligned}
&\bar{\eta}=\omega+dt, \ \ \bar{\xi}=\tfrac{\partial}{\partial t}, \ \ \bar{g}
=\tilde{g}+\bar{\eta}\otimes \bar{\eta},\\
&\bar{\varphi}\,\Big(\sum_{i=1}^{n}(a_i\tfrac{\partial}{\partial z_i}
+b_i\tfrac{\partial}{\partial \bar{z}_i})+e\tfrac{\partial}{\partial t}\Big)\\
&=\sqrt{-1}\sum_{i=1}^n(b_i\tfrac{\partial}{\partial z_i}
-a_i\tfrac{\partial}{\partial \bar{z}_i})+\tfrac{2}{1-\|z\|^2}\sum_{i=1}^n
(b_i\bar{z}_i+a_iz_i)\tfrac{\partial}{\partial t}.
\end{aligned}\right.
\end{equation}
Then, $\tilde{N}^{2n+1}(\tilde{c})=(\mathbb{B}^{n}\times\mathbb{R}, \varphi, \xi, \eta, g)$
is given by the $D_a$-homothetic deformation
$$
\eta=a\bar{\eta}, \ \ \xi=\tfrac{1}{a}\bar{\xi}, \ \ g=a\bar{g}+a(a-1)\bar{\eta}\otimes\bar{\eta},
$$
and $\tilde{c}=-\frac{1}{a}-3$, where $a$ is a positive number.

As were introduced in \cite{HY}, the contact Whitney spheres in
$\tilde{N}^{2n+1}(\tilde{c})$ for $\tilde{c}<-3$ are a one-parameter
family of Legendrian immersions
$\tilde\Phi_{\theta}:\mathbb{S}^n\rightarrow\mathbb{B}^n\times\mathbb{R}$ for $\theta>0$,
that are explicitly given by
\begin{equation}\label{eqn:1.8}
\pi(\tilde\Phi_{\theta}(u_1,u_2,\ldots,u_{n+1}))=\Pi\Big(\tfrac{(u_1,\ldots,u_n)}
{\cosh\theta+i\sinh\theta {u}_{n+1}}; \tfrac{\sinh\theta\cosh\theta(1+u^2_{n+1})
-iu_{n+1}}{\cosh^2\theta+\sinh^2\theta u^2_{n+1}}\Big),
\end{equation}
where $\pi:\tilde{N}^{2n+1}(\tilde{c})\rightarrow N^n(c)$ with $c=\tilde{c}+3$ is the canonical projection
and $\Pi:\mathbb{H}^{2n+1}_1(-1)\rightarrow\mathbb{C}H^n(-4)$ is the Hopf projection.

\vskip2mm
According to Proposition 2 of Blair-Carriazo \cite{BC} and Theorem 4.2 of Hu-Yin \cite{HY},
for each contact Whitney sphere $M^n$ in any Sasakian space form $\tilde{N}^{2n+1}(\tilde{c})$,
the second fundamental form $h$ and the mean curvature vector field $\vec{H}$ satisfy the relation
\begin{equation}\label{eqn:1.9}
h(X,Y)=\tfrac{n}{n+2}\big[g(X,Y)\vec{H}+g(\varphi X,\vec{H})\varphi Y+g(\varphi Y,\vec{H})\varphi X\big]
\end{equation}
for any tangent vector fields $X,Y\in TM^n$. Without introducing the notion of contact Whitney
spheres as canonical examples, Piti\c{s} \cite{P} proved that results for Lagrangian submanifolds
of the complex space forms in \cite{CMU,RU} hold analogously for those Legendrian submanifolds of
the Sasakian space forms which satisfy \eqref{eqn:1.9}. Moreover, an analogue of the result for
Whitney spheres in complex space forms by Li-Vrancken \cite{LV} was established for contact Whitney
spheres in Sasakian space forms by Hu-Yin \cite{HY}. It follows that a corresponding version of
Theorem \ref{thm:1.1} for contact Whitney spheres in Sasakian space forms is already known.

Next, along a similar spirit as above, we get the second main result of this paper. Actually,
we can show that an optimal integral inequality, as in Theorem \ref{thm:1.2}, that involves the
Ricci curvature ${\rm Ric}\,(\varphi\vec{H},\varphi\vec{H})$ in the direction $\varphi\vec{H}$ and the
norm of the modified covariant differentiation $\bar\nabla^\xi h$ of the second fundamental form
holds also for compact Legendrian submanifolds in the Sasakian space forms:

\begin{theorem}\label{thm:1.3}
Let $x:M^n\rightarrow\tilde{N}^{2n+1}(\tilde{c})$ be an $n$-dimensional compact Legendrian
submanifold. Then, it holds that
\begin{equation}\label{eqn:1.10}
\int_{M^n}{\rm Ric}\,(\varphi\vec{H},\varphi\vec{H})~dV_{M^n}\leq
\tfrac{(n-1)(n+2)}{3n^2}\int_{M^n}\|\bar\nabla^\xi h\|^2~dV_{M^n},
\end{equation}
where, $\bar\nabla^\xi h$ denotes the projection of $\bar\nabla h$ onto $TM^n\oplus\varphi(TM^n)$,
whereas $\bar\nabla h$ denotes the covariant differentiation of $h$ with respect to the van
der Waerden-Bortolotti connection of $M^n\hookrightarrow\tilde{N}^{2n+1}(\tilde{c})$,
$\|\cdot\|$ and $dV_{M^n}$ denote the tensorial norm and the volume element of $M^n$
with respect to the induced metric, respectively.

Moreover, the equality in \eqref{eqn:1.10} holds if and only if either $\bar\nabla^\xi h=0$
(i.e. $x(M^n)$ is of $C$-parallel second fundamental form), or $x(M^n)$ is
one of the contact Whitney spheres in $\tilde{N}^{2n+1}(\tilde{c})$.
\end{theorem}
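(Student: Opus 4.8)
The plan is to mirror the proof of Theorem \ref{thm:1.2} in the Sasakian setting, the only genuinely new feature being the presence of the Reeb direction $\xi$. First I would record the structural equations of a Legendrian submanifold $x:M^n\to\tilde N^{2n+1}(\tilde c)$: the normal bundle splits orthogonally as $\varphi(TM^n)\oplus\mathbb R\xi$; since $\eta|_{TM^n}=0$ one computes $g(h(X,Y),\xi)=0$, so $h$ is $\varphi(TM^n)$-valued and the cubic form $C(X,Y,Z):=g(h(X,Y),\varphi Z)$ is totally symmetric. Writing out the Codazzi equation in $\tilde N^{2n+1}(\tilde c)$ and using that $M^n$ is integral and Legendrian, every term of the ambient curvature carrying $\eta$ or pairing $\varphi(TM^n)$ against $TM^n$ drops out, so $\big(R(X,Y)Z\big)^{\perp}=0$ and $(\bar\nabla_X h)(Y,Z)$ is symmetric in $X,Y$. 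Consequently the $4$-tensor $T_{ijkl}:=g\big((\bar\nabla_{e_i}h)(e_j,e_k),\varphi e_l\big)$ is totally symmetric; and because $\varphi e_l\in\varphi(TM^n)$ it coincides with $g\big((\bar\nabla^\xi_{e_i}h)(e_j,e_k),\varphi e_l\big)$, whence $\|\bar\nabla^\xi h\|^2=\|T\|^2$ with $T$ fully symmetric.

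Second, I would run the two pointwise ingredients. Decomposing the symmetric $T$ into its totally traceless part $T_0$ plus its pure-trace part, and noting that the trace of $T$ is $n$ times the $\varphi(TM^n)$-component $(\nabla^\perp\vec H)^{\varphi}$ of $\nabla^\perp\vec H$, yields the contact analogue of Theorem \ref{thm:1.1}(3),
\[
\|\bar\nabla^\xi h\|^2=\|T_0\|^2+\tfrac{3n^2}{n+2}\,\big\|(\nabla^\perp\vec H)^{\varphi}\big\|^2,
\]
and in particular the pointwise bound $\|\bar\nabla^\xi h\|^2\ge\tfrac{3n^2}{n+2}\|(\nabla^\perp\vec H)^{\varphi}\|^2$ with equality if and only if $T_0=0$.

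Third, the Bochner--Yano step. Since $\vec H\in\varphi(TM^n)$, the field $V:=\varphi\vec H$ is tangent to $M^n$. A direct computation, in which the $\xi$-terms coming from $(\nabla_X\varphi)=g(X,\cdot)\xi-\eta(\cdot)X$ and $\nabla_X\xi=-\varphi X$ cancel, gives $\nabla_X V=\varphi\big((\nabla^\perp_X\vec H)^{\varphi}\big)$, hence $|\nabla V|^2=\|(\nabla^\perp\vec H)^{\varphi}\|^2$; moreover the full symmetry of $T$ forces $\nabla V$ to be a symmetric endomorphism. Feeding $V$ into K.~Yano's integral formula $\int_{M^n}\mathrm{Ric}(V,V)=\int_{M^n}\big[(\operatorname{div}V)^2-\operatorname{tr}((\nabla V)^2)\big]$ and using $\operatorname{tr}((\nabla V)^2)=|\nabla V|^2$ together with $(\operatorname{div}V)^2\le n|\nabla V|^2$ and $n\ge2$, I obtain $\int_{M^n}\mathrm{Ric}(\varphi\vec H,\varphi\vec H)\le(n-1)\int_{M^n}\|(\nabla^\perp\vec H)^{\varphi}\|^2$. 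Combining this with the pointwise bound of the second step produces exactly \eqref{eqn:1.10}.

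Finally, the equality discussion. Tracking the two inequalities, equality in \eqref{eqn:1.10} forces, pointwise almost everywhere, both $T_0=0$ and the Cauchy--Schwarz equality $(\operatorname{div}V)^2=n|\nabla V|^2$. The decisive check is that $T_0=0$ already makes the latter automatic: combined with the symmetry of $T$ it forces $(\nabla^\perp\vec H)^{\varphi}=\mu\,\varphi(\,\cdot\,)$ for a function $\mu$, i.e. $\nabla V=-\mu\,\mathrm{Id}$ is conformal. Thus equality is equivalent to $T_0=0$ everywhere. If $(\nabla^\perp\vec H)^{\varphi}\equiv0$ this returns $\bar\nabla^\xi h=0$, the $C$-parallel case; otherwise $T_0=0$ is the contact analogue of Theorem \ref{thm:1.1}(3), and the classification of Hu--Yin and Blair--Carriazo \cite{HY,BC} identifies $x(M^n)$ as a contact Whitney sphere, the converse being checked directly from \eqref{eqn:1.9}. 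I expect the main obstacle to be precisely this bookkeeping of the Reeb-direction corrections: showing that the $\xi$-contributions cancel on the Yano side and are exactly the terms removed in passing from $\bar\nabla h$ to $\bar\nabla^\xi h$, and that the two equality conditions collapse---via the conformality of $(\nabla^\perp\vec H)^{\varphi}$---onto the single classification hypothesis that isolates the two cases of the statement.
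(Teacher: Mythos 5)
Your proposal is correct and takes essentially the same route as the paper: Yano's formula applied to $V=\varphi\vec{H}$, the bound $({\rm div}\,V)^2\le n\|\nabla V\|^2$, the pointwise inequality $\|\bar\nabla^\xi h\|^2\ge\tfrac{3n^2}{n+2}\|\bar\nabla^\xi\vec{H}\|^2$ (which the paper simply cites as Lemma 3.3 of \cite{HY}, while you re-derive it via the trace decomposition of the totally symmetric tensor $T$), and the Hu--Yin classification \cite{HY} for the equality case. The one place you go beyond the paper is the equality discussion: the paper asserts it is ``easily seen'' that equality in \eqref{eqn:1.10} reduces to the identical validity of equality in \eqref{eqn:4.2}, whereas you explicitly verify that condition \eqref{eqn:4.3}, combined with the total symmetry of $T$, forces $H^{j^*}_{,i}=\mu\,\delta_{ij}$ (i.e.\ $\varphi\vec{H}$ is conformal), so that the Cauchy--Schwarz equality is automatic --- this check is correct and is precisely the detail needed to justify the paper's claim.
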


\begin{remark}\label{rm:1.2}
The classification of Legendrian submanifolds with $C$-parallel second fundamental
form in the Sasakian space forms has been fulfilled. For the details see Theorem 4.1
in \cite{HY}.
\end{remark}

\vskip1mm
From Theorem \ref{thm:1.3}, we get a new and global geometric characterization of the
contact Whitney spheres in $\tilde{N}^{2n+1}(\tilde{c})$:

\begin{corollary}\label{cor:1.2}
Let $x:M^n\rightarrow\tilde{N}^{2n+1}(\tilde{c})\ (n\ge2)$ be an $n$-dimensional compact
Legendrian submanifold with non-$C$-parallel mean curvature vector field. Then,
\begin{equation}\label{eqn:1.11}
\int_{M^n}{\rm Ric}\,(\varphi\vec{H},\varphi\vec{H})~dV_{M^n}=
\tfrac{(n-1)(n+2)}{3n^2}\int_{M^n}\|\bar{\nabla}^\xi h\|^2~dV_{M^n}
\end{equation}
holds if and only if $x(M^n)$ is a contact Whitney sphere in $\tilde{N}^{2n+1}(\tilde{c})$ .
\end{corollary}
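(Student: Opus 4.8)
The plan is to deduce Corollary \ref{cor:1.2} directly from Theorem \ref{thm:1.3}, treating the theorem as the fundamental analytic input and the corollary as its sharpened geometric reformulation under the extra hypothesis. Since the integral inequality \eqref{eqn:1.10} always holds, the equality \eqref{eqn:1.11} is precisely the borderline case, so by the equality discussion in Theorem \ref{thm:1.3} the manifold $x(M^n)$ must be either of $C$-parallel second fundamental form (i.e. $\bar\nabla^\xi h=0$) or one of the contact Whitney spheres. The only work remaining is to excise the first alternative using the standing assumption that the mean curvature vector field is not $C$-parallel.

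First I would observe that the ``if'' direction is immediate: if $x(M^n)$ is a contact Whitney sphere, then Theorem \ref{thm:1.3} (equality clause) guarantees that \eqref{eqn:1.11} holds, and one must only check that a contact Whitney sphere genuinely has non-$C$-parallel mean curvature vector so that it qualifies as an admissible submanifold in the corollary's hypothesis. This last point follows from the explicit structure equation \eqref{eqn:1.9}: differentiating the relation $h(X,Y)=\tfrac{n}{n+2}\big[g(X,Y)\vec{H}+g(\varphi X,\vec{H})\varphi Y+g(\varphi Y,\vec{H})\varphi X\big]$ covariantly, a contact Whitney sphere would have $\bar\nabla^\xi h=0$ only if $\nabla^\perp\vec{H}$ vanishes in the appropriate sense, which by the explicit immersions \eqref{eqn:1.6}--\eqref{eqn:1.8} is false away from the degenerate parameter values; hence the contact Whitney spheres fall strictly into the second alternative and not the first.

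For the ``only if'' direction I would argue as follows. Assuming \eqref{eqn:1.11} and invoking the equality characterization in Theorem \ref{thm:1.3}, exactly one of the two alternatives holds. The hypothesis that $\vec{H}$ is non-$C$-parallel rules out the possibility $\bar\nabla^\xi h=0$: indeed, if the full $C$-covariant differentiation of $h$ vanished, then taking the $g$-trace over the first two arguments would force the corresponding $C$-covariant derivative of $\vec{H}$ to vanish as well, contradicting the non-$C$-parallel assumption. (Here I would use that the mean curvature vector is the trace of $h$ and that tracing commutes with the projected covariant differentiation $\bar\nabla^\xi$, a fact I expect to have recorded in the preparatory sections.) Therefore the only remaining possibility is that $x(M^n)$ is a contact Whitney sphere, completing the proof.

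The main obstacle, and the only step that is not purely formal, is the compatibility check between the two alternatives of Theorem \ref{thm:1.3} and the non-$C$-parallel hypothesis --- namely verifying cleanly that $\bar\nabla^\xi h = 0$ implies $C$-parallel mean curvature. This requires knowing precisely how $\bar\nabla^\xi$ interacts with the contraction defining $\vec H$ and with the $\xi$-direction projection, so I would isolate this as a short lemma (or cite the relevant identity from Remark \ref{rm:1.2} and Theorem 4.1 of \cite{HY}) rather than reprove it inside the corollary. Everything else is a direct transcription of the equality case of Theorem \ref{thm:1.3}, so the corollary's proof should be only a few lines once this trace identity is in hand.
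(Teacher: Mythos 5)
Your proposal is correct and coincides with the paper's (implicit) argument: the paper offers no separate proof of Corollary \ref{cor:1.2}, presenting it as a direct consequence of Theorem \ref{thm:1.3}, where the equality case leaves the two alternatives and the non-$C$-parallel hypothesis excludes $\bar\nabla^\xi h=0$ precisely by the trace identity you describe (tracing \eqref{eqn:2.21} over $Y,Z$ gives $n\,\bar\nabla^\xi\vec{H}$, i.e.\ $\bar\nabla^\xi h=0$ forces $C$-parallel mean curvature). Your additional consistency check that contact Whitney spheres genuinely have non-$C$-parallel mean curvature is not logically required for the stated equivalence but is a sensible verification that the characterization is non-vacuous.
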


\section{Preliminaries}\label{sect:2}

In this section, we first briefly review some of the basic notions about Lagrangian
submanifolds in the complex space form $N^n(4c)$ and Legendrian submanifolds in
the Sasakian space form $\tilde{N}^{2n+1}(\tilde{c})$, respectively. Then, we state
a classical formula due to K. Yano that we need in the proof of our theorems.

Let $M^n\hookrightarrow N^n(4c)$ (resp. $M^n\hookrightarrow \tilde{N}^{2n+1}(\tilde{c})$)
be an isometric immersion from an $n$-dimensional Riemannian manifold $M^n$ into the
$n$-dimensional complex space form $N^n(4c)$ of constant holomorphic sectional curvature
$4c$ (resp. the $(2n+1)$-dimensional Sasakian space form $\tilde{N}^{2n+1}(\tilde{c})$ of
constant $\varphi$-section curvature $\tilde{c}$). For simplicity, we denote by the same
notation $g$ the Riemannian metric on $M^n$, $N^n(4c)$ and $\tilde{N}^{2n+1}(\tilde{c})$.
Let $\nabla$ (resp. $\bar\nabla$) be the Levi-Civita connection of $M^n$ (resp. $N^n(4c)$
and $\tilde{N}^{2n+1}(\tilde{c})$). Then, for both $M^n\hookrightarrow N^n(4c)$ and
$M^n\hookrightarrow \tilde{N}^{2n+1}(\tilde{c})$, we have the Gauss and Weingarten formulas:
\begin{align}\label{eqn:2.1}
\bar\nabla_XY=\nabla_XY+h(X,Y),\ \ \bar\nabla_XV=-A_VX+\nabla_X^\perp V
\end{align}
for any tangent vector fields $X,Y\in TM^n$ and normal vector field $V\in T^\perp M^n$. Here,
$\nabla^\bot$ denotes the normal connection in the normal bundle $T^\perp M$, $h$ (resp. $A_V$)
denotes the second fundamental form (resp. the shape operator with respect to $V$) of
$M^n\hookrightarrow N^n(4c)$ (resp. $M^n\hookrightarrow \tilde{N}^{2n+1}(\tilde{c})$). From
\eqref{eqn:2.1}, we have the relation
\begin{equation}\label{eqn:2.2}
g(h(X,Y),V)=g(A_VX,Y).
\end{equation}

\subsection{Lagrangian submanifolds of the complex space form $N^n(4c)$}\label{sect:2.1}~

The curvature tensor $\bar{R}(X,Y)Z:=\bar{\nabla}_{X}\bar{\nabla}_{Y}Z-\bar{\nabla}_{Y}\bar{\nabla}_{X}Z-\bar{\nabla}_{[X,Y]}Z$
of $N^n(4c)$ has the following expression:
\begin{equation}\label{eqn:2.3}
\begin{aligned}
\bar{R}(X,Y)Z=c\big[&g(Y,Z)X-g(X,Z)Y\\
&+g(JY,Z)JX-g(JX,Z)JY-2g(JX,Y)JZ\big].
\end{aligned}
\end{equation}

Let $M^n\hookrightarrow N^n(4c)$ be a Lagrangian immersion. Then, we have (cf. e.g. \cite{LV})
\begin{align}\label{eqn:2.4}
\nabla_X^\perp JY=J\nabla^{\perp}_XY,\quad A_{JX}Y=-Jh(X,Y)=A_{JY}X,
\end{align}
and thus $g(h(X,Y),JZ)$ is totally symmetric in $X$, $Y$ and $Z$:
\begin{equation}\label{eqn:2.5}
g(h(X,Y),JZ)=g(h(Y,Z),JX)=g(h(Z,X),JY).
\end{equation}

We choose a local adapted Lagrangian frame field
$\{e_1,...,e_n,e_{1^*},\ldots,e_{n^*}\}$ such that $e_1,\ldots,e_n$ are
orthonormal tangent vector fields, and $e_{1^*}=Je_1,\ldots,e_{n^*}=Je_n$
are orthonormal normal vector fields of $M^n\hookrightarrow N^n(4c)$,
respectively. In follows we shall make use of the indices convention:
$i^*=n+i,\ \ 1\le i,j,k,\ldots\le n$.

Denote by $\{\omega^1,\ldots,\omega^n\}$ the dual frame of $\{e_1,\ldots,e_n\}$.
Let $\omega_i^j$ and $\omega_{i^*}^{j^*}$ denote the connection
$1$-forms of $TM^n$ and $T^\perp M^n$, respectively:
$$
\nabla e_i=\sum^n_{j=1}\omega_{i}^je_j, \ \
\nabla^\perp e_{i^*}=\sum_{j=1}^n\omega_{i^*}^{j^*}e_{j^*},\ \ 1\le i\le n,
$$
where $\omega_i^j+\omega^i_j=0$ and by \eqref{eqn:2.4} it holds that
$\omega_i^j=\omega_{i^*}^{j^*}$.
Put $h^{k^*}_{ij}=g(h(e_i,e_j),Je_k)$. From \eqref{eqn:2.5}, we see that
\begin{equation}\label{eqn:2.6}
h^{k^*}_{ij}=h^{j^*}_{ik}=h^{i^*}_{jk},\ \ 1\leq i,j,k\leq n.
\end{equation}

Let $R_{ijkl}:=g\big(R(e_i,e_j)e_l,e_k\big)$ and
$R_{ijk^*l^*}:=g\big(R^\perp(e_i,e_j)e_{l^*},e_{k^*}\big)$ be the components of
the curvature tensors of $\nabla$ and $\nabla^\bot$, respectively. Then the equations
of Gauss, Ricci and Codazzi of $M^n\hookrightarrow N^n(4c)$ are given by
\begin{equation}\label{eqn:2.7}
R_{ijkl}=c(\delta_{ik}\delta_{jl}-\delta_{il}\delta_{jk})
         +\sum_{m}(h^{m^*}_{ik}h^{m^*}_{jl}-h^{m^*}_{il}h^{m^*}_{jk}),
\end{equation}
\begin{equation}\label{eqn:2.8}
R_{ijk^*l^*}=c(\delta_{ik}\delta_{jl}-\delta_{il}\delta_{jk})
+\sum_{m=1}^n(h^{m^*}_{ik}h^{m^*}_{jl}-h^{m^*}_{il}h^{m^*}_{jk})=R_{ijkl},
\end{equation}
\begin{gather}
h^{l^*}_{ij,k}=h^{l^*}_{ik,j},\label{eqn:2.9}
\end{gather}
where $h^{l^*}_{ij,k}$ denotes the components of the covariant differentiation
of $h$, namely $\bar{\nabla}h$, defined by
\begin{equation}\label{eqn:2.10}
\sum_{l=1}^nh^{l^*}_{ij,k}e_{l^*}:=\nabla^{\perp}_{e_k}\big(h(e_i,e_j)\big)
-h(\nabla_{e_k}e_i,e_j)-h(e_i,\nabla_{e_k}e_j).
\end{equation}

The mean curvature vector field $\vec{H}$ of $M^n\hookrightarrow N^n(4c)$
is defined by
\begin{equation}\label{eqn:2.11}
\vec{H}:=\tfrac1n\sum_{i=1}^nh(e_i,e_i)=:\sum_{j=1}^nH^{j^*}e_{j^*},\ \
H^{j^*}=\tfrac1n\sum_{i=1}^nh^{j^*}_{ii},\ \ 1\le j\le n.
\end{equation}

Put $\nabla^\perp_{e_i}\vec{H}=\sum\limits_{j=1}^nH^{j^*}_{,i}e_{j^*}$, $1\le i\le n$.
From \eqref{eqn:2.6} and \eqref{eqn:2.9}, we obtain
\begin{equation}\label{eqn:2.12}
H^{j^*}_{,i}=H^{i^*}_{,j},\ \ 1\le i,j\le n.
\end{equation}

\subsection{Legendrian submanifolds of the Sasakian space form $\tilde{N}^{2n+1}(\tilde{c})$}\label{sect:2.2}~

The following facts of this subsection are referred to e.g. \cite{HY}. The curvature
tensor of the Sasakian space form $\tilde{N}^{2n+1}(\tilde{c})$ is given by
\begin{equation}\label{eqn:2.13}
\begin{split}
\bar{R}(X,Y)Z=&\tfrac{\tilde{c}+3}{4}[g(Y,Z)X-g(X,Z)Y]+\tfrac{\tilde{c}-1}{4}\big[\eta(X)\eta(Z)Y\\
&-\eta(Y)\eta(Z)X+g(X,Z)\eta(Y)\xi-g(Y,Z)\eta(X)\xi\\
&+g(\varphi Y,Z)\varphi X-g(\varphi X,Z)\varphi Y
 +2g(X,\varphi Y)\varphi Z\big].
\end{split}
\end{equation}

Moreover, for tangent vector fields $X,Y$ of $\tilde{N}^{2n+1}(\tilde{c})$, the Sasakian structure
$(\varphi,\xi,\eta,g)$ of $\tilde{N}^{2n+1}(\tilde{c})$ satisfy:
\begin{equation}\label{eqn:2.14}
\left\{
\begin{aligned}
&\eta(X)=g(X,\xi),\ \ \varphi\xi=0,\ \ \eta(\varphi X)=0,\\
&\varphi^2X=-X+\eta(X)\xi,\ \ d\eta(X,Y)=g(X,\varphi Y),\\
&g(\varphi X,\varphi Y)=g(X,Y)-\eta(X)\eta(Y),\ \ {\rm rank}\,(\varphi)=2n,\\
&\bar{\nabla}_{X}\xi=-\varphi X,\ \ (\bar{\nabla}_{X}\varphi)Y =g(X,Y)\xi-\eta(Y)X.
\end{aligned}
\right.
\end{equation}

Let $M^n\hookrightarrow \tilde{N}^{2n+1}(\tilde{c})$ be a Legendrian immersion.
Then, we have
\begin{equation}\label{eqn:2.15}
A_{\varphi Y}X=-\varphi h(X,Y),\quad
\nabla^{\bot}_{X}\varphi Y=\varphi\nabla_XY+g(X,Y)\xi.
\end{equation}

In follows we shall make the following convention on range of indices:
$$
\alpha^*=\alpha+n;\ \ 1\le i,j,k,l,m\le n; \ \ 1\le \alpha,\beta\le n+1.
$$

We choose a local {\it Legendre frame field} $\{e_1,\ldots,e_n,e_{1^*},
\ldots,e_{n^*},e_{2n+1}=\xi\}$ along $M^n\hookrightarrow \tilde{N}^{2n+1}(\tilde{c})$
such that $\{e_i\}_{i=1}^n$ is an orthonormal frame field of $M^n$, and $\{e_{1^*}=\varphi
e_1, \ldots,e_{n^*}=\varphi e_n,e_{2n+1}=\xi\}$ are the orthonormal normal vector fields
of $M^n\hookrightarrow\tilde{N}^{2n+1}(\tilde{c})$. Let $\omega_i^j$ and $\omega_{\alpha^*}^{\beta^*}$
denote the connection $1$-forms of $TM^n$ and $T^\perp M^n$, respectively:
$$
\nabla e_i=\sum^n_{j=1}\omega_{i}^je_j, \ \
\nabla^\perp e_{\alpha^*}=\sum_{\beta=1}^{n+1}\omega_{\alpha^*}^{\beta^*}e_{\beta^*},\ \ 1\le i\le n,\ \ 1\le\alpha\le n+1,
$$
where $\omega_i^j+\omega_j^i=0$ and $\omega_{\alpha^*}^{\beta^*}
+\omega_{\beta^*}^{\alpha^*}=0$. Moreover, by \eqref{eqn:2.15}, we have
$\omega_i^j=\omega_{i^*}^{j^*}$ and $\omega_{i^*}^{2n+1}=\omega^i$.
Put $h^{k^*}_{ij}=g(h(e_i,e_j),\varphi e_k)$ and $h^{2n+1}_{ij}=g(h(e_i,e_j),e_{2n+1})$.
From \eqref{eqn:2.14} and \eqref{eqn:2.15}, we have
\begin{equation}\label{eqn:2.16}
h^{k^*}_{ij}=h^{j^*}_{ik}=h^{i^*}_{jk},\quad h^{2n+1}_{ij}=0,\ \ 1\le i,j,k\le n.
\end{equation}

Now, with the same notations as in the preceding subsection, the equations of Gauss, Ricci
and Codazzi of $M^n\hookrightarrow \tilde{N}^{2n+1}(\tilde{c})$ are as follows:
\begin{equation}\label{eqn:2.17}
R_{ijkl}=\tfrac{\tilde{c}+3}4(\delta_{ik}\delta_{jl}-\delta_{il}\delta_{jk})
+\sum_{m=1}^n(h_{ik}^{m^*}h_{jl}^{m^*}-h_{il}^{m^*}h_{jk}^{m^*}),
\end{equation}
\begin{equation}\label{eqn:2.18}
R_{ijk^*l^*}=\tfrac{\tilde{c}-1}4(\delta_{ik}\delta_{jl}-\delta_{il}\delta_{jk})
+\sum_{m=1}^n(h^{m^*}_{ik}h^{m^*}_{jl}-h^{m^*}_{il}h^{m^*}_{jk}), \ \ R_{ijk^*(2n+1)}=0,
\end{equation}
\begin{equation}\label{eqn:2.19}
h^{\alpha^*}_{ij,k}=h^{\alpha^*}_{ik,j},
\end{equation}
where as usual $h^{\alpha^*}_{ij,k}$ is defined by
\begin{equation}\label{eqn:2.20}
\sum_{\alpha=1}^{n+1}h^{\alpha^*}_{ij,k}e_{\alpha^*}:=\nabla_{e_k}^\perp\big(h(e_i,e_j)\big)
-h(\nabla_{e_k}e_i,e_j)-h(e_i,\nabla_{e_k}e_j),\ \ 1\le i,j,k\le n.
\end{equation}
Moreover, associated to $\nabla,\,\nabla^\perp$ and $\bar\nabla$, we can naturally define
a modified covariant differentiation $\bar\nabla^\xi h$ of the second fundamental form
by
\begin{equation}\label{eqn:2.21}
(\bar\nabla^{\xi}_Xh)(Y,Z):=\nabla_X^\perp(h(Y,Z))-h(\nabla_XY,Z)
-h(Y,\nabla_XZ)-g(h(Y,Z),\varphi X)\xi.
\end{equation}

Recall that the second fundamental form $h$ of $M^n\hookrightarrow \tilde{N}^{2n+1}(\tilde{c})$
is said to be $C$-parallel if and only if $\bar\nabla^\xi h=0$ (cf. \cite{HY}). Actually, we have
$g((\bar\nabla^{\xi}_Xh)(Y,Z),\xi)=0$ for any $X,Y,Z\in TM^n$. Thus, we
can denote
\begin{equation}\label{eqn:2.22}
(\bar\nabla^{\xi}_{e_k}h)(e_i,e_j):=\sum_{l=1}^n\bar{h}^{l^*}_{ij,k}e_{l^*},\ \ 1\le i,j,k\le n.
\end{equation}

Then, by \eqref{eqn:2.20}, \eqref{eqn:2.21} and the above discussions, we have
\begin{equation}\label{eqn:2.23}
h_{ij,k}^{(n+1)^*}=h_{ij}^{k^*},\ \ h^{l^*}_{ij,k}=\bar{h}^{l^*}_{ij,k},\ \ \forall\, i,j,k,l.
\end{equation}

From \eqref{eqn:2.16}, the mean curvature vector $\vec{H}$ of
$M^n\hookrightarrow\tilde{N}^{2n+1}(\tilde{c})$ becomes:
\begin{equation}\label{eqn:2.24}
\vec{H}=\tfrac1n\sum_{i=1}^nh(e_i,e_i)=\sum_{k=1}^nH^{k^*}e_{k^*},\
\ H^{k^*}:=\tfrac1n\sum_{i=1}^nh^{k^*}_{ii},\ \ 1\le k\le n.
\end{equation}

Put
$$
\nabla^\perp_{e_i} \vec{H}=\sum_{\alpha=1}^{n+1} H^{\alpha^*}_{,i}e_{\alpha^*},\ \
\bar\nabla^\xi_{e_i}\vec{H}:=\nabla^\perp_{e_i}\vec{H}-g(\vec{H},e_{i^*})\xi
=:\sum_{k=1}^n\bar{H}^{k^*}_{,i}e_{k^*},\ \ 1\le i\le n.
$$

From \eqref{eqn:2.16}, \eqref{eqn:2.19} and \eqref{eqn:2.23}, we get
\begin{equation}\label{eqn:2.25}
H^{j^*}_{,i}=H^{i^*}_{,j},\quad \bar{H}^{j^*}_{,i}=H^{j^*}_{,i},\ \ 1\le i,j\le n.
\end{equation}

\subsection{Yano's formula}\label{sect:2.3}~

In order to prove Theorem \ref{thm:1.2} and Theorem \ref{thm:1.3}, we still need
the following useful formula due to K. Yano \cite{Y}. A simply proof is referred
also to \cite{HMVY}.

\begin{lemma}[cf. Lemma 5.1 of \cite{HMVY}]\label{lem:2.1}
Let $(M,g)$ be a Riemannian manifold with Levi-Civita connection $\nabla$. Then,
for any tangent vector field $X$ on $M$, it holds that
\begin{equation}\label{eqn:2.26}
\begin{aligned}
{\rm div}(\nabla_XX-({\rm div}X)X)={\rm Ric}\,(X,X)+\tfrac{1}{2}\|\mathcal{L}_Xg\|^2
-\|\nabla X\|^2-({\rm div}X)^2,
\end{aligned}
\end{equation}
where $\mathcal{L}_Xg$ is the Lie derivative of $g$ with respect to $X$ and $\|\cdot\|$
denotes the length with respect to $g$.
\end{lemma}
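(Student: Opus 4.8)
The plan is to prove \eqref{eqn:2.26} as a pointwise identity, evaluating both sides at an arbitrary point $p\in M$ in a frame adapted to $p$. First I would fix $p$ and choose a local orthonormal frame $\{e_i\}$ that is geodesic at $p$, i.e. $(\nabla_{e_i}e_j)(p)=0$ for all $i,j$; this also forces $(\nabla_Xe_i)(p)=0$ and lets me write $\mathrm{div}\,Y=\sum_ig(\nabla_{e_i}Y,e_i)$ with all correction terms from the frame dropping out at $p$. It is convenient to introduce the components $A_{ij}:=g(\nabla_{e_i}X,e_j)$ of the $(1,1)$-tensor $\nabla X$, because the three quadratic quantities on the right-hand side are all expressible through them: one has $\|\nabla X\|^2=\sum_{i,j}A_{ij}^2$ and $\mathrm{div}\,X=\sum_iA_{ii}$, while the torsion-free, metric-compatible computation of the Lie derivative gives $(\mathcal L_Xg)(e_i,e_j)=A_{ij}+A_{ji}$, whence $\tfrac12\|\mathcal L_Xg\|^2=\|\nabla X\|^2+\sum_{i,j}A_{ij}A_{ji}$. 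Rearranging yields the key auxiliary identity $\sum_{i,j}A_{ij}A_{ji}=\tfrac12\|\mathcal L_Xg\|^2-\|\nabla X\|^2$, which I would set aside for the final substitution.

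Next I would expand the left-hand side as $\mathrm{div}(\nabla_XX)-\mathrm{div}((\mathrm{div}\,X)X)$ and treat the two pieces separately. The second piece is elementary: using $\sum_ig(X,e_i)e_i=X$, a direct computation at $p$ gives $\mathrm{div}((\mathrm{div}\,X)X)=X(\mathrm{div}\,X)+(\mathrm{div}\,X)^2$. For the first piece I would invoke the Ricci commutation identity $\nabla_{e_i}\nabla_XX=R(e_i,X)X+\nabla_X\nabla_{e_i}X+\nabla_{[e_i,X]}X$, pair with $e_i$, and sum over $i$. The curvature term produces exactly $\sum_ig(R(e_i,X)X,e_i)=\mathrm{Ric}(X,X)$ in the sign convention fixed by the definition of $R_{ijkl}$ preceding \eqref{eqn:2.7}; the term $\sum_ig(\nabla_X\nabla_{e_i}X,e_i)$ collapses to $X(\mathrm{div}\,X)$ at $p$ (here the vanishing of $\nabla_Xe_i$ at $p$ is precisely what allows $X$ to be pulled outside the contraction); and, since $[e_i,X]=\nabla_{e_i}X$ at $p$, the bracket term becomes $\sum_ig(\nabla_{\nabla_{e_i}X}X,e_i)=\sum_{i,j}A_{ij}A_{ji}$.

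Assembling these computations gives $\mathrm{div}(\nabla_XX)=\mathrm{Ric}(X,X)+X(\mathrm{div}\,X)+\sum_{i,j}A_{ij}A_{ji}$, and upon subtracting the second piece the two $X(\mathrm{div}\,X)$ terms cancel, leaving $\mathrm{Ric}(X,X)+\sum_{i,j}A_{ij}A_{ji}-(\mathrm{div}\,X)^2$. Substituting the auxiliary identity $\sum_{i,j}A_{ij}A_{ji}=\tfrac12\|\mathcal L_Xg\|^2-\|\nabla X\|^2$ from the first step then produces \eqref{eqn:2.26} at $p$, and since $p$ was arbitrary the formula holds on all of $M$.

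I would expect the main obstacle to be bookkeeping rather than anything conceptual. The delicate point is to identify correctly the three contributions coming from the Ricci identity and, in particular, to see that the bracket term $\nabla_{[e_i,X]}X$ contributes the transposed contraction $\sum_{i,j}A_{ij}A_{ji}$ and \emph{not} $\|\nabla X\|^2$; this asymmetry is exactly what makes the combination $\tfrac12\|\mathcal L_Xg\|^2-\|\nabla X\|^2$ appear. Keeping the Ricci sign convention consistent with the paper's curvature conventions, and checking that the $X(\mathrm{div}\,X)$ cross-terms genuinely cancel, are the two places where an error would most easily slip in.
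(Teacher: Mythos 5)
Your proof is correct. Note that the paper itself gives no proof of this lemma---the formula is quoted from Yano \cite{Y}, with Lemma 5.1 of \cite{HMVY} cited for a simple proof---so there is no in-paper argument to compare against; what you have written is precisely the standard commutation-of-derivatives proof that those references supply. Every step checks out: with the paper's sign convention $R(X,Y)Z=\nabla_X\nabla_YZ-\nabla_Y\nabla_XZ-\nabla_{[X,Y]}Z$, the Ricci identity gives $\mathrm{div}(\nabla_XX)=\mathrm{Ric}(X,X)+X(\mathrm{div}\,X)+\sum_{i,j}A_{ij}A_{ji}$ at the chosen point, the $X(\mathrm{div}\,X)$ terms cancel against $\mathrm{div}\big((\mathrm{div}\,X)X\big)=X(\mathrm{div}\,X)+(\mathrm{div}\,X)^2$, and your identification of the bracket term as the transposed contraction $\sum_{i,j}A_{ij}A_{ji}=\tfrac12\|\mathcal{L}_Xg\|^2-\|\nabla X\|^2$ rather than $\|\nabla X\|^2$ is exactly the crux; moreover, your norm bookkeeping agrees with the paper's convention $\|\mathcal{L}_Xg\|^2=\sum_{i,j}\big[(\mathcal{L}_Xg)(e_i,e_j)\big]^2$ implicit in \eqref{eqn:3.5}.
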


\section{Proof of Theorem \ref{thm:1.2}}\label{sect:3}

First of all, we state the following simple fact without proof.
\begin{lemma}\label{lem:3.1}
Let $x:M^n\rightarrow N^n(4c)$ be an $n$-dimensional Lagrangian submanifold
with mean curvature vector field $\vec{H}$. Then, it holds that
\begin{equation}\label{eqn:3.1}
\begin{aligned}
\|\nabla J\vec{H}\|^2\ge\tfrac1n({\rm div}J\vec{H})^2.
\end{aligned}
\end{equation}
Moreover, the equality in \eqref{eqn:3.1} holds if and only if
$\nabla J\vec{H}=\frac1n({\rm div}\,J\vec{H})\,{\rm id}$, i.e.,
$J\vec{H}$ is a conformal vector field on $M^n$, or equivalently,
$M^n$ is a Lagrangian submanifold with conformal Maslov form.
\end{lemma}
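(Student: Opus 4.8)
The plan is to reduce the stated inequality to the elementary Cauchy--Schwarz inequality applied to the matrix of covariant derivatives of the tangent vector field $J\vec H$, and then to identify the equality case with the conformality of $J\vec H$. Since $M^n$ is Lagrangian and $\vec H$ is normal, $J\vec H$ is a tangent vector field on $M^n$, so $\nabla(J\vec H)$ is a genuine $(1,1)$-tensor on $M^n$ and the quantities $\|\nabla J\vec H\|^2$ and ${\rm div}\,J\vec H$ are its Frobenius norm and its trace.

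First I would compute $\nabla(J\vec H)$ explicitly. Using that $J$ is parallel in the K\"ahler manifold $N^n(4c)$ together with the Weingarten formula \eqref{eqn:2.1}, one has $\bar\nabla_X(J\vec H)=J\bar\nabla_X\vec H=-JA_{\vec H}X+J\nabla^\perp_X\vec H$, whose tangential part yields $\nabla_X(J\vec H)=J\nabla^\perp_X\vec H$. Writing $A_{ij}:=g(\nabla_{e_i}(J\vec H),e_j)$ in the adapted Lagrangian frame and using $e_{k^*}=Je_k$, $J^2=-{\rm id}$, together with $\nabla^\perp_{e_i}\vec H=\sum_k H^{k^*}_{,i}e_{k^*}$, I obtain $A_{ij}=-H^{j^*}_{,i}$. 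The crucial structural observation is then that $A_{ij}$ is \emph{symmetric}: by the relation $H^{j^*}_{,i}=H^{i^*}_{,j}$ in \eqref{eqn:2.12}, itself a consequence of the Codazzi equation and the total symmetry \eqref{eqn:2.6}, we have $A_{ij}=A_{ji}$.

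With $A$ in hand the inequality is immediate: $\|\nabla J\vec H\|^2=\sum_{i,j}A_{ij}^2$ and ${\rm div}\,J\vec H=\sum_i A_{ii}$, so Cauchy--Schwarz applied to the diagonal gives
\[
({\rm div}\,J\vec H)^2=\Big(\textstyle\sum_i A_{ii}\Big)^2\le n\sum_i A_{ii}^2\le n\sum_{i,j}A_{ij}^2=n\|\nabla J\vec H\|^2,
\]
which is exactly \eqref{eqn:3.1}. For the equality discussion, equality forces simultaneously that all off-diagonal entries $A_{ij}$ $(i\neq j)$ vanish and that all diagonal entries coincide, i.e. $A_{ij}=\tfrac1n({\rm div}\,J\vec H)\delta_{ij}$, which is precisely $\nabla J\vec H=\tfrac1n({\rm div}\,J\vec H)\,{\rm id}$.

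Finally, because $\nabla J\vec H$ is symmetric, this last identity is equivalent to saying that the symmetric part of $\nabla(J\vec H)$ is a multiple of the identity, that is $\mathcal L_{J\vec H}g=\tfrac2n({\rm div}\,J\vec H)\,g$, so that $J\vec H$ is a conformal vector field and $M^n$ has conformal Maslov form. I expect the only real content to lie in establishing the symmetry $A_{ij}=A_{ji}$, since this is what upgrades the bare Cauchy--Schwarz equality condition into the geometric characterization by conformality; the inequality itself is purely algebraic and the formula $\nabla_X(J\vec H)=J\nabla^\perp_X\vec H$ is routine.
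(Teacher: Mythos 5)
Your proof is correct and is essentially the argument the paper intends: the paper states Lemma \ref{lem:3.1} ``without proof,'' but its subsequent computations \eqref{eqn:3.4} and \eqref{eqn:3.5} rest on exactly the identities you establish, namely the tangential projection formula $\nabla_X(J\vec H)=J\nabla^\perp_X\vec H$ and the symmetry $H^{j^*}_{,i}=H^{i^*}_{,j}$ from \eqref{eqn:2.12}. Your Cauchy--Schwarz step, and the observation that it is the symmetry of $\nabla(J\vec H)$ which upgrades the algebraic equality condition $\nabla J\vec H=\tfrac1n({\rm div}\,J\vec H)\,{\rm id}$ to the geometric statement that $J\vec H$ is conformal (equivalently, that the Maslov form is conformal), supply precisely the missing content.
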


We also need the following result due to Li-Vrancken \cite{LV}:
\begin{lemma}[cf. Lemma 3.2 in \cite{LV}]\label{lem:3.2}
Let $x:M^n\rightarrow N^n(4c)$ be an $n$-dimensional Lagrangian submanifold
with mean curvature tensor $\vec{H}$. Then, it holds that
\begin{equation}\label{eqn:3.2}
\|\bar\nabla h\|^2\ge\tfrac{3n^2}{n+2}\|\nabla^\perp\vec{H}\|^2.
\end{equation}

Moreover, the equality in \eqref{eqn:3.2} holds if and only if
\begin{equation}\label{eqn:3.3}
(\bar\nabla_Z h)(X,Y)=\tfrac{n}{n+2}\big[g(Y,Z)\nabla_X^\perp\vec{H}+g(X,Z)\nabla_Y^\perp\vec{H}+g(X,Y)\nabla_Z^\perp\vec{H}\big].
\end{equation}

\end{lemma}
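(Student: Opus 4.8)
The plan is to reduce the inequality to an elementary pointwise estimate for totally symmetric tensors, applied fibrewise over the normal bundle. First I would fix a normal index $l$ and regard the array $h^{l^*}_{ij,k}$ as the components of a $3$-tensor $S^{(l)}$ in the tangent indices $i,j,k$. The first key step is to observe that $S^{(l)}$ is totally symmetric in $i,j,k$: symmetry in $i,j$ is immediate from the symmetry of $h$, while symmetry in $j,k$ is exactly the Codazzi equation \eqref{eqn:2.9}; these two transpositions generate full symmetry in the three lower indices. (In fact, combined with the Lagrangian total symmetry \eqref{eqn:2.5}--\eqref{eqn:2.6}, the array is symmetric in all four indices, but only symmetry in $i,j,k$ is needed here.) I would also record that the trace of $S^{(l)}$ is governed by the mean curvature, $\sum_i h^{l^*}_{ii,k}=nH^{l^*}_{,k}$ by \eqref{eqn:2.11}, and that $H^{l^*}_{,k}$ is symmetric in $k,l$ by \eqref{eqn:2.12}.

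The second step is an orthogonal decomposition. For the symmetric $3$-tensor $S^{(l)}$ I would introduce the pure-trace tensor
\begin{equation*}
\Theta^{(l)}_{ijk}=\tfrac{n}{n+2}\big[\delta_{ij}H^{l^*}_{,k}+\delta_{ik}H^{l^*}_{,j}+\delta_{jk}H^{l^*}_{,i}\big],
\end{equation*}
whose trace $\sum_i\Theta^{(l)}_{iik}=nH^{l^*}_{,k}$ agrees with that of $S^{(l)}$. A direct contraction, using the symmetry of $S^{(l)}$ in $i,j,k$ and the trace identity above, shows that $\langle S^{(l)},\Theta^{(l)}\rangle=\tfrac{3n^2}{n+2}\sum_k(H^{l^*}_{,k})^2=\|\Theta^{(l)}\|^2$, so that $S^{(l)}-\Theta^{(l)}$ is orthogonal to $\Theta^{(l)}$. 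Pythagoras then gives $\|S^{(l)}\|^2=\|S^{(l)}-\Theta^{(l)}\|^2+\|\Theta^{(l)}\|^2\ge\|\Theta^{(l)}\|^2$, with equality precisely when $S^{(l)}=\Theta^{(l)}$.

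The third step is the norm computation. Expanding the square and using $\sum_{ij}\delta_{ij}^2=n$ and $\sum_i\delta_{ii}=n$, I would obtain $\|\Theta^{(l)}\|^2=3(n+2)\sum_k\big(\tfrac{n}{n+2}H^{l^*}_{,k}\big)^2=\tfrac{3n^2}{n+2}\sum_k(H^{l^*}_{,k})^2$. Summing the fibrewise inequality over the normal index $l$, and using $\|\bar\nabla h\|^2=\sum_{i,j,k,l}(h^{l^*}_{ij,k})^2$ together with $\|\nabla^\perp\vec{H}\|^2=\sum_{k,l}(H^{l^*}_{,k})^2$, then yields \eqref{eqn:3.2}. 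The equality case follows by collecting the fibrewise conditions $S^{(l)}=\Theta^{(l)}$ for every $l$, which is exactly the tensorial identity \eqref{eqn:3.3}.

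I expect the only genuinely delicate point to be the symmetrization step: one must verify that covariant differentiation preserves the total symmetry coming from \eqref{eqn:2.5}--\eqref{eqn:2.6} (which relies on $J$ and $g$ being parallel, so that $g(h(X,Y),JZ)$ and its covariant derivative are symmetric) and that the Codazzi equation \eqref{eqn:2.9} is applied to the correct index pair. Everything downstream --- the identification of the trace of $S^{(l)}$ with $\nabla^\perp\vec{H}$ and the exact constant $\tfrac{3n^2}{n+2}$ --- hinges on $S^{(l)}$ being fully symmetric in its three lower indices. The remaining computations, namely the orthogonality check and the evaluation of $\|\Theta^{(l)}\|^2$, are routine bookkeeping with Kronecker deltas.
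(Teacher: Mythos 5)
Your proof is correct: the total symmetry of $h^{l^*}_{ij,k}$ in $i,j,k$ does follow from the Codazzi equation \eqref{eqn:2.9} together with the symmetry of $h$, the trace identity $\sum_i h^{l^*}_{ii,k}=nH^{l^*}_{,k}$ holds since traces commute with covariant differentiation, and your orthogonality and norm computations yield exactly the constant $\tfrac{3n^2}{n+2}$ and the equality condition \eqref{eqn:3.3}. Note that the paper gives no proof of this lemma itself, quoting it from Li--Vrancken \cite{LV}; your fibrewise Pythagoras argument is essentially the standard proof found there, which completes the square on the tensor $h^{l^*}_{ij,k}-\tfrac{n}{n+2}\big(H^{l^*}_{,i}\delta_{jk}+H^{l^*}_{,j}\delta_{ik}+H^{l^*}_{,k}\delta_{ij}\big)$ --- the same decomposition phrased additively rather than geometrically.
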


\vskip1mm
Now, we are ready to complete the proof of Theorem \ref{thm:1.2}.

\begin{proof}[Proof of Theorem \ref{thm:1.2}]
Let $M^n\hookrightarrow N^n(4c)$ be a compact Lagrangian submanifold
and $\{e_1,...,e_n,e_{1^*},\ldots,e_{n^*}\}$ be a local adapted Lagrangian
frame field along $M^n$. From \eqref{eqn:2.4} and that
$\nabla^\perp_{e_i}\vec{H}=\sum\limits_{j=1}^nH^{j^*}_{,i}e_{j^*}$, we have
\begin{equation}\label{eqn:3.4}
\begin{aligned}
\|\nabla J\vec{H}\|^2=\|\nabla^{\perp}\vec{H}\|^2=\sum_{i,j=1}^n(H^{j^*}_{,i})^2.
\end{aligned}
\end{equation}

Then, by \eqref{eqn:2.12}, calculating the squared length of the Lie derivative
$\mathcal{L}_{J\vec{H}}g$ of $g$ with respect to $J\vec{H}$, we obtain
\begin{equation}\label{eqn:3.5}
\begin{aligned}
\|\mathcal{L}_{J\vec{H}}g\|^2&=\sum_{i,j=1}^n\big[(\mathcal{L}_{J\vec{H}}g)(e_i,e_j)\big]^2
=\sum_{i,j=1}^n\big(H^{j^*}_{,i}+H^{i^*}_{,j}\big)^2=4\|\nabla^{\perp}\vec{H}\|^2.
\end{aligned}
\end{equation}

Thus, we can apply Lemma \ref{lem:2.1} and \eqref{eqn:3.1} to obtain that
\begin{equation}\label{eqn:3.6}
\begin{aligned}
{\rm div}(\nabla_{J\vec{H}}J\vec{H}-({\rm div}J\vec{H})J\vec{H})
&={\rm Ric}\,(J\vec{H},J\vec{H})+\|\nabla J\vec{H}\|^2-({\rm div}J\vec{H})^2\\
&\ge{\rm Ric}\,(J\vec{H},J\vec{H})-(n-1)\|\nabla^{\perp}\vec{H}\|^2,
\end{aligned}
\end{equation}
where the equality in \eqref{eqn:3.6} holds if and only if
$\nabla J\vec{H}=\frac1n({\rm div}\,J\vec{H})\,{\rm id}$,
or equivalently, $M^n$ is a Lagrangian submanifold with conformal Maslov form.

From \eqref{eqn:3.6}, by further applying Lemma \ref{lem:3.2}, we get
\begin{equation}\label{eqn:3.7}
{\rm div}(\nabla_{J\vec{H}}J\vec{H}-({\rm div}J\vec{H})J\vec{H})
\ge{\rm Ric}\,(J\vec{H},J\vec{H})-\tfrac{(n-1)(n+2)}{3n^2}\|\bar\nabla h\|^2,
\end{equation}
where the equality holds if and only if both $\nabla J\vec{H}=\frac1n({\rm div}\,J\vec{H})\,{\rm id}$
and \eqref{eqn:3.3} hold.

By the compactness of $M^n$, we can integrate the inequality \eqref{eqn:3.7} over $M^n$.
Then, applying for the divergence theorem, we obtain the integral inequality \eqref{eqn:1.4}.

It is easily seen that the equality holds in \eqref{eqn:1.4} if and only if
the equality in \eqref{eqn:3.2} holds identically. Thus, according to Main Theorem in \cite{LV},
equality in \eqref{eqn:1.4} holds if and only if either $x(M^n)$
is of parallel second fundamental form, or $x(M^n)$ is one of the Whitney spheres
in $N^n(4c)$.

This completes the proof of Theorem \ref{thm:1.2}.
\end{proof}

\section{Proof of Theorem \ref{thm:1.3}}\label{sect:4}

Let $x:M^n\rightarrow\tilde{N}^{2n+1}(\tilde{c})$ be an $n$-dimensional
Legendrian submanifold in the Sasakian space form $\tilde{N}^{2n+1}(\tilde{c})$
with Sasakian structure $(\varphi,\xi,\eta,g)$. First of all, similar to
Lemma \ref{lem:3.1}, we have the following simple result.
\begin{lemma}\label{lem:4.1}
Let $x:M^n\rightarrow\tilde{N}^{2n+1}(\tilde{c})$ be an $n$-dimensional
Legendian submanifold with mean curvature vector field $\vec{H}$. Then,
it holds that
\begin{equation}\label{eqn:4.1}
\begin{aligned}
\|\nabla(\varphi\vec{H})\|^2\ge\tfrac1n({\rm div}\,\varphi\vec{H})^2.
\end{aligned}
\end{equation}
Moreover, the equality in \eqref{eqn:4.1} holds if and only if
$\nabla(\varphi\vec{H})=\frac1n({\rm div}\,\varphi\vec{H})\,{\rm id}$, i.e.,
$\varphi\vec{H}$ is a conformal vector field on $M^n$.
\end{lemma}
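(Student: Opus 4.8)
The plan is to mirror the (unstated) proof of Lemma \ref{lem:3.1}, exploiting the symmetry relation \eqref{eqn:2.25} together with a Cauchy--Schwarz estimate applied to the matrix of $\nabla(\varphi\vec{H})$. The first and most essential step is to verify that $\varphi\vec{H}$ is in fact tangent to $M^n$ and to compute its covariant derivative explicitly. Writing $\vec{H}=\sum_k H^{k^*}e_{k^*}=\sum_k H^{k^*}\varphi e_k$ and using $\varphi^2 e_k=-e_k+\eta(e_k)\xi$ from \eqref{eqn:2.14} together with $\eta(e_k)=g(e_k,\xi)=0$ (the submanifold being Legendrian), I would obtain $\varphi\vec{H}=-\sum_k H^{k^*}e_k\in TM^n$.

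Next I would compute $\nabla_{e_i}(\varphi\vec{H})$ by extracting the tangential part of $\bar\nabla_{e_i}(\varphi\vec{H})$. Expanding $\bar\nabla_{e_i}(\varphi\vec{H})=(\bar\nabla_{e_i}\varphi)\vec{H}+\varphi(\bar\nabla_{e_i}\vec{H})$, the first term vanishes by \eqref{eqn:2.14} since $g(e_i,\vec{H})=0=\eta(\vec{H})$; applying the Weingarten formula and then $\varphi\xi=0$, $\varphi^2 e_k=-e_k$ to $\varphi(\nabla^\perp_{e_i}\vec{H})$, and noting that $-\varphi(A_{\vec{H}}e_i)$ is normal, the tangential part reduces to $\nabla_{e_i}(\varphi\vec{H})=-\sum_k H^{k^*}_{,i}\,e_k$. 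Hence the endomorphism $\nabla(\varphi\vec{H})$ has matrix entries $g(\nabla_{e_i}(\varphi\vec{H}),e_j)=-H^{j^*}_{,i}$, which is \emph{symmetric} by the relation $H^{j^*}_{,i}=H^{i^*}_{,j}$ in \eqref{eqn:2.25}.

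With these identities in hand the inequality is immediate: $\|\nabla(\varphi\vec{H})\|^2=\sum_{i,j}(H^{j^*}_{,i})^2$ while ${\rm div}(\varphi\vec{H})=-\sum_i H^{i^*}_{,i}$, so by Cauchy--Schwarz $({\rm div}\,\varphi\vec{H})^2\le n\sum_i(H^{i^*}_{,i})^2\le n\,\|\nabla(\varphi\vec{H})\|^2$, giving \eqref{eqn:4.1}. Tracing the two Cauchy--Schwarz steps, equality forces all off-diagonal entries $H^{j^*}_{,i}$ with $i\neq j$ to vanish and all diagonal entries to coincide, that is, $\nabla(\varphi\vec{H})=\tfrac1n({\rm div}\,\varphi\vec{H})\,{\rm id}$; since $\nabla(\varphi\vec{H})$ is symmetric, this is precisely the statement that $\varphi\vec{H}$ is a conformal vector field on $M^n$.

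I expect the only genuine obstacle to be the bookkeeping in the covariant-derivative computation—in particular confirming that $\varphi\vec{H}$ is truly tangent and that the shape-operator term $-\varphi(A_{\vec{H}}e_i)$ contributes nothing to the tangential component—rather than the final estimate, which is routine linear algebra once the symmetry of the matrix is secured.
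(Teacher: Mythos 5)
Your proof is correct: the verification that $\varphi\vec{H}$ is tangent, the computation $g(\nabla_{e_i}(\varphi\vec{H}),e_j)=-H^{j^*}_{,i}$ (with the shape-operator term dropping out as a normal vector), the symmetry from \eqref{eqn:2.25}, and the trace Cauchy--Schwarz estimate with its equality analysis are all sound, including your observation that symmetry of $\nabla(\varphi\vec{H})$ is what makes the equality condition equivalent to conformality. The paper states Lemma \ref{lem:4.1} without proof (as a ``simple result'' analogous to Lemma \ref{lem:3.1}), but your argument is precisely the intended one: it reproduces the computation the paper carries out in \eqref{eqn:4.4} within the proof of Theorem \ref{thm:1.3} and fills in the omitted linear-algebra step.
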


We also need the following result:

\begin{lemma}[cf. Lemma 3.3 in \cite{HY}]\label{lem:4.2}
Let $x:M^n\rightarrow\tilde{N}^{2n+1}(\tilde{c})$ be an $n$-dimensional Legendrian
submanifold with second fundamental form $h$ and mean curvature
vector field $\vec{H}$. Then, it holds that
\begin{equation}\label{eqn:4.2}
\|\bar\nabla^{\xi} h\|^2\geq \tfrac{3n^2}{n+2}\|\bar\nabla^{\xi}\vec{H}\|^2,
\end{equation}
where, with respect to a local Legendre frame field $\{e_A\}_{A=1}^{2n+1}$,
$$
\|\bar\nabla^{\xi} h \|^2=\sum_{i,j,k,l=1}^n(h^{l^*}_{ij,k})^2,\ \
\|\bar\nabla^{\xi}\vec{H}\|^2=\sum_{i,j=1}^n(H^{j^*}_{,i})^2.
$$

Moreover, the equality in \eqref{eqn:4.2} holds if and only if
\begin{equation}\label{eqn:4.3}
h^{l^*}_{ij,k}=\tfrac{n}{n+2}\big(H^{l^*}_{,i}\delta_{jk}+H^{l^*}_{,j}\delta_{ik}
+H^{l^*}_{,k}\delta_{ij}\big),\ \ 1\le i,j,k,l\le n.
\end{equation}
\end{lemma}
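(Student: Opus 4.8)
The plan is to reduce the tensorial inequality \eqref{eqn:4.2} to a sharp pointwise inequality for totally symmetric $3$-tensors, applied one normal direction at a time. With respect to the chosen Legendre frame the two norms in \eqref{eqn:4.2} are $\|\bar\nabla^\xi h\|^2=\sum_{i,j,k,l=1}^n (h^{l^*}_{ij,k})^2$ and $\|\bar\nabla^\xi\vec H\|^2=\sum_{i,j=1}^n(H^{j^*}_{,i})^2$, and tracing the definition of $\bar\nabla^\xi\vec H$ together with \eqref{eqn:2.23}, \eqref{eqn:2.24} and \eqref{eqn:2.25} gives $H^{l^*}_{,i}=\tfrac1n\sum_{m=1}^n h^{l^*}_{mm,i}$. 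Thus it suffices to prove, for each fixed $l$, the slice inequality $\sum_{i,j,k}(h^{l^*}_{ij,k})^2\ge\tfrac{3}{n+2}\sum_i\big(\sum_m h^{l^*}_{mm,i}\big)^2$, and then sum over $l$.

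First I would record the symmetries of the array $h^{l^*}_{ij,k}$. For fixed $l$ it is symmetric in $i$ and $j$ because $h$ is symmetric, and it is symmetric in $j$ and $k$ by the Codazzi equation \eqref{eqn:2.19}; hence $h^{l^*}_{ij,k}$ is totally symmetric in the three lower indices $i,j,k$ (the further symmetry among $i,j,l$ coming from \eqref{eqn:2.16} is true but will not be needed). The only Sasakian subtlety here is that $\bar\nabla h$ carries, in addition, a $\xi$-component, namely $h^{(n+1)^*}_{ij,k}=h^{k^*}_{ij}$; this is precisely the component discarded by the modification $\bar\nabla^\xi$, so by \eqref{eqn:2.23} the components of $\bar\nabla^\xi h$ along $\varphi(TM^n)$ are exactly the $h^{l^*}_{ij,k}$ with $1\le l\le n$, and the symmetry in $i,j,k$ is unaffected by the modification.

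The heart of the argument is then the elementary fact that a totally symmetric $3$-tensor $T=(T_{ijk})$ on $\mathbb{R}^n$, with trace vector $t_i:=\sum_{m=1}^n T_{mmi}$, satisfies $\sum_{i,j,k}T_{ijk}^2\ge\tfrac{3}{n+2}\sum_i t_i^2$, with equality if and only if $T_{ijk}=\tfrac{1}{n+2}(\delta_{ij}t_k+\delta_{ik}t_j+\delta_{jk}t_i)$. I would prove this by the orthogonal splitting $T=T'+T''$, where $T''_{ijk}:=\tfrac{1}{n+2}(\delta_{ij}t_k+\delta_{ik}t_j+\delta_{jk}t_i)$ is the pure-trace part: one checks that $T''$ has trace vector $t$, that $T'$ is then trace-free, that $T'$ and $T''$ are orthogonal, and that $\|T''\|^2=\tfrac{3}{n+2}\|t\|^2$ (the diagonal terms in $\|T''\|^2$ contribute $3n\|t\|^2$ and the cross terms $6\|t\|^2$, giving $3(n+2)\|t\|^2$ before dividing by $(n+2)^2$). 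Hence $\|T\|^2=\|T'\|^2+\|T''\|^2\ge\tfrac{3}{n+2}\|t\|^2$, with equality exactly when $T'=0$. Applying this to $T_{ijk}=h^{l^*}_{ij,k}$, for which $t_i=nH^{l^*}_{,i}$, and summing over $l=1,\dots,n$ yields \eqref{eqn:4.2} with the stated constant $\tfrac{3n^2}{n+2}$; equality holds if and only if every slice is pure-trace, that is $h^{l^*}_{ij,k}=\tfrac{n}{n+2}(\delta_{jk}H^{l^*}_{,i}+\delta_{ik}H^{l^*}_{,j}+\delta_{ij}H^{l^*}_{,k})$, which is exactly \eqref{eqn:4.3}.

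The only genuinely delicate points are the sharp constant and the precise equality case in the algebraic inequality, which are dictated by the pure-trace projector $T''$, and the verification that the modification $\bar\nabla^\xi$ does not destroy the Codazzi symmetry used above. I expect the bookkeeping of the Sasakian structure, in particular keeping track of the $\xi$-direction via \eqref{eqn:2.23}, to be the main place where care is required, while the inequality itself is routine once $h^{l^*}_{ij,k}$ is known to be symmetric in $i,j,k$.
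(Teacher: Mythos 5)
Your proof is correct; note that the paper itself gives no proof of this lemma (it is quoted as Lemma 3.3 of \cite{HY}), so the only comparison is with the cited source. Your argument---verify total symmetry of $h^{l^*}_{ij,k}$ in $i,j,k$ via Codazzi, reduce to the trace identity $\sum_m h^{l^*}_{mm,i}=nH^{l^*}_{,i}$, and then apply the orthogonal splitting of a totally symmetric $3$-tensor into its pure-trace part $T''_{ijk}=\tfrac{1}{n+2}(\delta_{ij}t_k+\delta_{ik}t_j+\delta_{jk}t_i)$ and a trace-free remainder, with $\|T''\|^2=\tfrac{3}{n+2}\|t\|^2$---is exactly the standard completing-the-square proof used there (and in Li--Vrancken's Lemma 3.2 for the Lagrangian case), including the correct identification of the equality case \eqref{eqn:4.3}.
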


\vskip2mm
Now, we are ready to complete the proof of Theorem \ref{thm:1.3}.

\begin{proof}[Proof of Theorem \ref{thm:1.3}]

Let $x:M^n\rightarrow\tilde{N}^{2n+1}(\tilde{c})$ be a compact $n$-dimensional
Legendrian submanifold and $\{e_1,\ldots,e_n,e_{1^*},\ldots,e_{n^*},e_{2n+1}=\xi\}$
be a local adapted Legendre frame field along $M^n$. By definition, we have
\begin{equation}\label{eqn:4.4}
\begin{aligned}
\|\nabla(\varphi\vec{H})\|^2=\sum_{i,j=1}^n(g(\nabla_{e_i}(\varphi\vec{H}),e_j))^2
=\sum_{i,j=1}^n(\bar{H}^{j^*}_{,i})^2=\|\bar\nabla^\xi\vec{H}\|^2.
\end{aligned}
\end{equation}

Then, by \eqref{eqn:2.25}, calculating the squared length of the Lie derivative
$\mathcal{L}_{\varphi\vec{H}}g$ of $g$ with respect to $\varphi\vec{H}$, we obtain
\begin{equation}\label{eqn:4.5}
\begin{aligned}
\|\mathcal{L}_{\varphi\vec{H}}g\|^2
=\sum_{i,j=1}^n\big[(\mathcal{L}_{\varphi\vec{H}}g)(e_i,e_j)\big]^2
=\sum_{i,j=1}^n\big(H^{j^*}_{,i}+H^{i^*}_{,j}\big)^2=4\|\nabla(\varphi\vec{H})\|^2.
\end{aligned}
\end{equation}

Thus, we can apply Lemma \ref{lem:2.1} and \eqref{eqn:4.1} to obtain that
\begin{equation}\label{eqn:4.6}
\begin{aligned}
{\rm div}(\nabla_{\varphi\vec{H}}(\varphi\vec{H})-({\rm div}\,\varphi\vec{H})\varphi\vec{H})
&={\rm Ric}\,(\varphi\vec{H},\varphi\vec{H})+\|\nabla (\varphi\vec{H})\|^2-({\rm div}\varphi\vec{H})^2\\
&\ge{\rm Ric}\,(\varphi\vec{H},\varphi\vec{H})-(n-1)\|\nabla(\varphi\vec{H})\|^2,
\end{aligned}
\end{equation}
where the equality in \eqref{eqn:4.6} holds if and only if
$\nabla(\varphi\vec{H})=\frac1n({\rm div}\,\varphi\vec{H})\,{\rm id}$.

From \eqref{eqn:4.6} and that $\|\nabla(\varphi\vec{H})\|^2=\|\bar\nabla^\xi\vec{H}\|^2$,
by further applying Lemma \ref{lem:4.2}, we get
\begin{equation}\label{eqn:4.7}
{\rm div}(\nabla_{\varphi\vec{H}}(\varphi\vec{H})-({\rm div}\,\varphi\vec{H})\varphi\vec{H})
\ge{\rm Ric}\,(\varphi\vec{H},\varphi\vec{H})-\tfrac{(n-1)(n+2)}{3n^2}\|\bar\nabla^\xi h\|^2,
\end{equation}
where the equality holds if and only if both $\nabla(\varphi\vec{H})=\frac1n({\rm div}\,\varphi\vec{H})\,{\rm id}$
and \eqref{eqn:4.3} hold.

By the compactness of $M^n$, we can integrate the inequality \eqref{eqn:4.7}
over $M^n$. Then, applying for the divergence theorem, we obtain the integral
inequality \eqref{eqn:1.10}.

It is easily seen from the above arguments that the equality in \eqref{eqn:1.10} holds
if and only if the equality in \eqref{eqn:4.2} holds identically. Thus, according to
Theorem 1.1 in \cite{HY}, equality in \eqref{eqn:1.10} holds if and only if either
$x(M^n)$ is of $C$-parallel second fundamental form, or $x(M^n)$ is one of the contact
Whitney spheres in $\tilde{N}^{2n+1}(\tilde{c})$.

This completes the proof of Theorem \ref{thm:1.3}.
\end{proof}

\vskip2mm
As final remarks, we would mention that all the Whitney spheres in the complex space
forms are conformally equivalent to the round sphere (cf. \cite{RU} and \cite{CMU}).
Now, for any one of the contact Whitney spheres, $x:\mathbb{S}^n\rightarrow\tilde{N}^{2n+1}(\tilde{c})$,
its second fundamental form $h$ has the expression \eqref{eqn:1.9}. Thus, by using the
Gauss equation and direct calculations, we can immediately obtain the following

\begin{theorem}
The sectional curvatures of the contact Whitney spheres are not constant. Nevertheless, all
the contact Whitney spheres in each of the Sasakian space forms are conformally equivalent
to the round sphere.
\end{theorem}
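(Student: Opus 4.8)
The plan is to exploit the explicit expression \eqref{eqn:1.9} for the second fundamental form of any contact Whitney sphere, together with the Gauss equation \eqref{eqn:2.17}, to compute the sectional curvatures directly and show they are non-constant, and then to compute the Weyl tensor and show it vanishes. First I would set up adapted notation: since $\vec{H}\neq0$ on a contact Whitney sphere (it is not totally geodesic), I would choose at a point $p$ an orthonormal tangent frame $\{e_1,\ldots,e_n\}$ with $\varphi\vec{H}$ proportional to $e_1$, so that $g(\varphi e_k,\vec{H})=|\vec{H}|\,\delta_{1k}$. Writing $\lambda:=|\vec{H}|$ and $\mu:=\tfrac{n}{n+2}$, relation \eqref{eqn:1.9} then gives the components $h^{m^*}_{ij}=\mu\big(g(e_i,e_j)H^{m^*}+g(\varphi e_i,\vec H)\delta_{jm}+g(\varphi e_j,\vec H)\delta_{im}\big)$, which in this frame become explicit: the only nonzero shape-operator directions involve the index $1$. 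This reduces the whole computation to a small number of cases.

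Next I would insert these components into the Gauss equation \eqref{eqn:2.17} to read off the sectional curvatures $K(e_i,e_j)=R_{ijij}$. The key point is that the second-fundamental-form contribution $\sum_m(h^{m^*}_{ii}h^{m^*}_{jj}-(h^{m^*}_{ij})^2)$ takes one value when the $2$-plane contains the distinguished direction $e_1$ (i.e. when $\{i,j\}\ni 1$) and a different value when it does not (both $i,j\ge 2$). Concretely, I expect the sectional curvature of a plane spanned by $e_1$ and some $e_j$ ($j\ge2$) to differ from that of a plane spanned by $e_i,e_j$ with $i,j\ge2$ by a term proportional to $\lambda^2=|\vec H|^2\neq0$. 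Exhibiting two such planes with manifestly unequal curvature establishes that the sectional curvatures are not constant, proving the first assertion of the theorem.

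For the second assertion — conformal flatness, equivalently (for $n\ge4$) the vanishing of the Weyl conformal curvature tensor — I would use the same frame to verify that the curvature tensor \eqref{eqn:2.17} has the algebraic form of a \emph{curvature tensor of a conformally flat manifold}, namely $R_{ijkl}=\alpha(\delta_{ik}\delta_{jl}-\delta_{il}\delta_{jk})+\beta_1(\delta_{ik}\delta_{jl}-\delta_{il}\delta_{jk})$-type plus a rank-one correction supported on the $e_1$-direction. More precisely, since the extrinsic term is a combination of $g\odot g$ and $g\odot(\varphi\vec H)^\flat\otimes(\varphi\vec H)^\flat$ (a Kulkarni--Nomizu product of the metric with a rank-one symmetric tensor), the Riemann tensor is a Kulkarni--Nomizu product $g\owedge S$ for a symmetric $2$-tensor $S$; any such tensor has vanishing Weyl part, hence the manifold is conformally flat. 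For $n=2,3$ conformal flatness is automatic, so it suffices to record the $n\ge4$ case. I would then conclude that, being a simply connected conformally flat sphere, it is globally conformally equivalent to the round $\mathbb{S}^n$.

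The main obstacle is the bookkeeping in the Gauss-equation step: one must correctly expand $\sum_m h^{m^*}_{ii}h^{m^*}_{jj}$ and $\sum_m (h^{m^*}_{ij})^2$ using the totally symmetric structure \eqref{eqn:2.16} of $h^{m^*}_{ij}$, and keep careful track of which terms survive once the frame is adapted to $\varphi\vec H$. The genuinely conceptual step — recognizing the extrinsic curvature term as a Kulkarni--Nomizu product of $g$ with a rank-one tensor, and hence Weyl-free — is short once the components are in hand; the risk is purely computational error in the coefficients. A useful cross-check is that the trace of $R_{ijkl}$ must reproduce the Ricci and scalar curvatures consistent with the non-constant sectional curvatures found in the first part.
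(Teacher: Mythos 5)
Your overall strategy --- substituting \eqref{eqn:1.9} into the Gauss equation \eqref{eqn:2.17} in a frame adapted to $\varphi\vec{H}$ and analyzing the resulting curvature tensor --- is exactly the ``direct calculation'' that the paper's one-line proof invokes, and your computations are correct as far as they go. Writing $\vec{H}=\lambda\,\varphi e_1$ and $\mu=\tfrac{n}{n+2}$, one gets $h^{m^*}_{ij}=\mu\lambda(\delta_{ij}\delta_{1m}+\delta_{i1}\delta_{jm}+\delta_{j1}\delta_{im})$, hence $K(e_1,e_j)=\tfrac{\tilde c+3}{4}+2\mu^2\lambda^2$ while $K(e_i,e_j)=\tfrac{\tilde c+3}{4}+\mu^2\lambda^2$ for $2\le i\ne j\le n$, and the extrinsic term in \eqref{eqn:2.17} equals $\mu^2\lambda^2\big[(\delta_{ik}\delta_{jl}-\delta_{il}\delta_{jk})+\delta_{ik}u_ju_l+\delta_{jl}u_iu_k-\delta_{il}u_ju_k-\delta_{jk}u_iu_l\big]$ with $u_i=\delta_{i1}$, which is indeed the Kulkarni--Nomizu product of $g$ with a symmetric $2$-tensor. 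So your argument establishes non-constancy of the sectional curvature for $n\ge3$ (at a point where $\vec H\ne0$, which exists since otherwise \eqref{eqn:1.9} forces $h\equiv0$), and vanishing of the Weyl tensor, hence local conformal flatness, for $n\ge4$.

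The genuine gaps are in the low dimensions, which the theorem also covers. (i) For $n=2$ your non-constancy argument is vacuous: there is only one tangent $2$-plane at each point, and the Gauss curvature is $K=\tfrac{\tilde c+3}{4}+\tfrac12|\vec H|^2$, so you need to know that $|\vec H|^2$ varies from point to point --- information your purely pointwise computation cannot deliver. (This can be repaired cheaply: since $\chi(\mathbb{S}^2)\ne0$, the tangent field $\varphi\vec H$ must vanish somewhere, so constant $K$ would force $\vec H\equiv0$ and hence, by \eqref{eqn:1.9}, total geodesy, a contradiction.) (ii) More seriously, your assertion that ``for $n=2,3$ conformal flatness is automatic'' is false for $n=3$: every $3$-dimensional metric has identically vanishing Weyl tensor, and local conformal flatness in dimension $3$ is instead equivalent to the vanishing of the Cotton tensor (equivalently, the Schouten tensor being a Codazzi tensor), a first-order condition on the curvature. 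Thus for $n=3$ the Kulkarni--Nomizu observation proves nothing. Closing this gap requires either verifying the Cotton tensor vanishes --- which uses the additional fact that $\varphi\vec H$ is a closed conformal vector field, $\nabla_X(\varphi\vec H)=fX$, on these submanifolds, not merely the algebraic form \eqref{eqn:1.9} of $h$ --- or arguing globally in the spirit of the paper's citations \cite{RU,CMU}: the contact Whitney spheres \eqref{eqn:1.6}--\eqref{eqn:1.8} are horizontal (Legendrian) lifts of, or fiber over, the Whitney spheres \eqref{eqn:1.1}--\eqref{eqn:1.3}, so their induced metrics agree up to homothety with those of the Whitney spheres, which are conformal to the round metric in every dimension. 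Your final step (a compact, simply connected, locally conformally flat manifold is conformally the round sphere, by Kuiper's theorem) is fine once local conformal flatness has been secured for all $n\ge2$.
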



\end{document}